\documentclass[11pt]{article}
\usepackage{amsmath}
\usepackage{amsfonts}
\usepackage{amsthm}
\usepackage{fullpage}
\usepackage{mathtools}
\usepackage{graphicx}
\usepackage[utf8]{inputenc}

\newtheorem{theorem}{Theorem}

\newtheorem{lemma}[theorem]{Lemma}

\newtheorem{conjecture}[theorem]{Conjecture}
\newtheorem{definition}[theorem]{Definition}

\newtheorem{remark}[theorem]{Remark}
\newcommand{\M}{\mathcal{M}}
\newcommand{\one}{\mathbf{1}}
\title{Exponential Lower Bounds on Spectrahedral Representations of Hyperbolicity Cones \thanks{This research was supported by NSF Grants CCF-1553751,
 NSF CCF-1343104 and NSF CCF-1407779, and a Sloan Research Fellowship for NS.}}
\author{Prasad Raghavendra\\ EECS\\ UC Berkeley \and Nick Ryder\\Mathematics\\UC Berkeley \and Nikhil Srivastava\\Mathematics\\UC Berkeley \and Benjamin Weitz\\Pinterest}
\begin{document}
\maketitle
\begin{abstract}
The Generalized Lax Conjecture asks whether every hyperbolicity cone is a
	section of a semidefinite cone of sufficiently high dimension. We prove
	that the space of hyperbolicity cones of hyperbolic polynomials of
	degree $d$ in $n$ variables contains $(n/d)^{\Omega(d)}$ pairwise
	distant cones in the Hausdorff metric, and therefore that any semidefinite
	representation of such polynomials must have dimension at least
	$(n/d)^{\Omega(d)}$ (even allowing a small approximation error). The cones are perturbations of the hyperbolicity cones of
	elementary symmetric polynomials. Our proof contains several ingredients of
	independent interest, including the identification of a large subspace
	in which the elementary symmetric polynomials lie in the relative
	interior of the set of hyperbolic polynomials, and a quantitative
	generalization of the fact that a real-rooted polynomial with two
	consecutive zero coefficients must have a high multiplicity root at
	zero.
\end{abstract}
\newcommand{\R}{\mathbb{R}}
\newcommand{\hdist}{\mathsf{hdist}}
\newcommand{\mdist}{\mathsf{mdist}}
\section{Introduction}

A homogeneous polynomial $p$ in $\R[x_1,\ldots,x_n]$ is said to be {\em hyperbolic} with respect to a direction $e \in \R^n$ if $p(e)\neq 0$ and all its univariate restrictions along the direction $e$ are real-rooted, i.e., for every $x \in \R^n$, the polynomial $p_x(t) = p(t e + x)$ has all real roots.
G\aa rding showed that every hyperbolic polynomial $p$ is associated with a closed\footnote{The usual definition considers open cones, but we will work with their closures instead.} convex cone $K_p$ defined as follows,
\[ K_p = \{x \in \R^n | \text{ all roots of } p(te + x) \text{ are non-positive} \}. \]
The cone $K_p$ is referred to as the hyperbolicity cone associated with the polynomial $p$.  

From the standpoint of convex optimization, hyperbolicity cones yield a rich
family of convex sets that one can efficiently optimize over --- in particular,
interior point methods can be used to efficiently optimize over the
hyperbolicity cone $K_p$ for a polynomial $p$, given an oracle to evaluate $p$
and its gradient and Hessian \cite{guler1997,renegar2006hyperbolic}. This
optimization primitive,  referred to as {\it hyperbolic programming}, captures
linear and semidefinite programming as special cases.  Specifically, the
hyperbolicity cone for the polynomial $p(x_1,\ldots,x_n) = \prod_{i = 1}^n x_i$
is the positive orthant $\R_+^n$, which corresponds to linear programming, while
semidefinite programming arises from the symmetric determinant polynomial
$\det(X)$. 
Is hyperbolic programming as an algorithmic primitive strictly more
 powerful than semidefinite programming?
Can hyperbolicity cones other than these two be harnessed towards obtaining
 better algorithms for combinatorial optimization problems?  These compelling questions remain
 open.  

Lately, the relationship between hyperbolicity cones and the semidefinite cone has been a subject
of intense study. It is easy to see that every section of the semidefinite cone is a hyperbolicity cone, so it is natural to ask
whether the converse holds, i.e., if every
hyperbolicity cone can be realized as a section of the semidefinite cone.
Formally, a {\it spectrahedral cone} $K$ is specified by:
$$ K = \{ x \in \R^n \mid \sum_i A_i x_i \succeq 0 \},$$ for some matrices $\{A_i\}_{i \in [n]} \in
\R^{B \times B}$.  It is conjectured that every hyperbolicity cone is a
spectahedral cone.

\begin{conjecture} (Generalized Lax Conjecture)
Every hyperbolicity cone is a spectahedral cone, i.e., a linear section of the cone of positive semidefinite matrices in some dimension.
\end{conjecture}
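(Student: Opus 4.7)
The plan would be to construct, for every hyperbolic polynomial $p \in \R[x_1,\ldots,x_n]$ of degree $d$ with hyperbolic direction $e$, symmetric matrices $A_1,\ldots,A_n$ such that $K_p = \{x : \sum_i A_i x_i \succeq 0\}$. Since the conjecture is still open, the outline below is necessarily speculative.

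First, I would take as a base case the three-variable Lax conjecture, proved by Helton and Vinnikov: every hyperbolic $p \in \R[x_1,x_2,x_3]$ of degree $d$ admits a definite symmetric determinantal representation $p = \det(x_1 A_1 + x_2 A_2 + x_3 A_3)$ with $d \times d$ symmetric $A_i$ and $\sum_i e_i A_i \succ 0$. For the general case, the natural first attempt is a dimension-reduction argument: restrict $p$ to generic three-dimensional affine subspaces through $e$, invoke Helton--Vinnikov on each slice, and try to glue the resulting pencils into a global representation on $\R^n$.

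A second, more algebraic line of attack is to search for a \emph{multiplier}: find a polynomial $q$ hyperbolic with respect to $e$ whose cone contains $K_p$ and such that $p \cdot q$ (or some $p^k \cdot q$) has a definite determinantal representation. This would exhibit $K_p = K_p \cap K_q$ as a spectrahedron, being the intersection of the spectrahedron cut out by the representation with the ambient cone $K_q$. Candidate multipliers include powers of $p$ itself, Renegar derivatives $\partial_e^j p$, and polynomials produced by a sum-of-squares hierarchy associated to the boundary variety of $K_p$.

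The principal obstacle is fundamental. In the gluing approach, distinct planar restrictions yield pencils that are only determined up to orthogonal conjugation and sign, and there is no known canonical way to make them globally compatible. In the multiplier approach, every previously proposed family of candidate multipliers has failed on explicit hard instances such as the hyperbolic polynomials arising from the Vámos matroid. Moreover, the main theorem of the present paper shows that any spectrahedral representation must have matrix dimension at least $(n/d)^{\Omega(d)}$, which rules out short, local, or combinatorial constructions and suggests that any successful approach must produce representations that are inherently global and exponentially large in the natural parameters.
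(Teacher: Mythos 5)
You have correctly identified that the statement is a \emph{conjecture}, not a theorem: the Generalized Lax Conjecture is open, and the paper offers no proof of it. The paper's actual contribution is orthogonal to proving the conjecture --- it establishes a \emph{lower bound} $(n/d)^{\Omega(d)}$ on the matrix dimension of any (even approximate) spectrahedral representation of certain hyperbolicity cones, which constrains how a proof of the conjecture could look but does not itself resolve it. Your write-up is therefore appropriate in refusing to supply a proof and instead surveying the two standard lines of attack (slicing via Helton--Vinnikov and multiplier/determinantal representability of $pq$), naming the known obstructions (non-canonicity of the planar pencils; Br\"and\'en's counterexamples to power-multiplier representability, e.g.\ the V\'amos polynomial), and correctly observing that the present paper's main theorem rules out small representations and hence any ``local'' or low-complexity construction. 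The one slight refinement worth adding: the paper's introduction records that the Generalized Lax Conjecture is \emph{equivalent} to the multiplier formulation you describe (existence of hyperbolic $q$ with $K_p \subseteq K_q$ and $pq$ admitting a definite determinantal representation), so your ``second line of attack'' is not merely a sufficient condition but a reformulation of the conjecture itself. Otherwise your assessment matches the paper's framing.
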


The Lax conjecture in its original stronger algebraic form asked whether every
polynomial in three variables hyperbolic with respect to the
direction $(1,0,0)$, could be written as $\det(xI+yB+zC)$ for some symmetric
matrices $B,C$ (sometimes called a {\em definite determinantal representation}). This immediately implies that all hyperbolicity cones in three
dimensions are spectrahedral, and was proved by Helton and Vinnikov and Lewis,
Parrilo, and Ramana \cite{helton2007linear,lewis2005lax}. The algebraic
conjecture is easily seen to be false for $n>3$ by a count of parameters, since
the set of hyperbolic polynomials is known to have nonempty interior
\cite{nuij1969note} and is of dimension $n^d$, whereas the set of $n-$tuples of
$d\times d$ matrices has dimension $n\binom{d}{2}$. This led to the weaker
conjecture that for every hyperbolic $p(x)$ there is an integer $k$ such that
$p(x)^k$ admits a definite determinantal representation, which was disproved by Br\"anden in \cite{branden2011obstructions}.
The Generalized Lax conjecture, which is a geometric statement, is equivalent to the yet weaker algebraic statement
that for every hyperbolic $p(x)$ there is a hyperbolic $q(x)$ such that $K_p\subseteq K_q$ and
$p(x)q(x)$ admits a definite determinantal representation.

For several special classes of hyperbolic polynomials, the corresponding
hyperbolicity cones are known to be spectrahedral.  The elementary
symmetric polynomial of degree $d$ in $n$ variables is given by $e_d(x) =
\sum_{S \subseteq [n], |S| = d} \prod_{i \in S} x_i$. Br\"anden \cite{branden2014hyperbolicity} 
 showed that the hyperbolicity cones of
elementary symmetric polynomials are spectrahedral with matrices of dimension $O(n^d)$.  If a polynomial $p$ is
hyperbolic with respect to a direction $e \in \R^n$, then its directional
derivatives along $e$ are hyperbolic too \cite{gaarding1959inequality}.
Directional derivatives of the polynomial $p(x) = \prod_{i \in [n]} x_i$
\cite{zinchenko2008hyperbolicity,sanyal2013derivative,branden2014hyperbolicity,choe2004homogeneous}
and the first derivatives of the determinant polynomial \cite{saunderson2017spectrahedral} are also known to
satisfy the generalized Lax conjecture. Amini \cite{amini2016spectrahedrality} has shown that 
certain multivariate matching polynomials are hyperbolic and that their cones are spectrahedral, of
dimension $n!$. 

Given the above examples, it is natural to wonder whether exponential
blowup in dimension is an essential feature of passing from hyperbolicity cones
to spectrahedral representations, and even assuming the generalized Lax conjecture to be
true, the size of the spectrahedral representation of a hyperbolicity cone is
interesting from a complexity standpoint. 
In this paper, we obtain exponential lower bounds on the size of the spectrahedral representation in general, even if
the representation is allowed to be approximate (up to an exponentially small error).

Recall that the Hausdroff distance between two cones
		$K$ and $K'$ is defined as $$\hdist(K,K') = \max_{x \in
		\mathcal{B} \cap K, y \in \mathcal{B} \cap K'} (d(x,K'),
		d(y,K)),$$ where $\mathcal{B}$ is the unit ball in $\R^n$.
We say that a spectrahedral cone $K'$ is  an {\em $\eta$-approximate spectrahedral representation} of another cone $K$ if
$\hdist(K,K')\le \eta$.
Our main theorem is the following.
\begin{theorem} [Main Theorem]\label{thm:main}
There exists an absolute constant $\kappa > 0$ such that for all sufficiently large $n, d$, 
there exists an n-variate degree $d$ hyperbolic polynomial $p$ whose hyperbolicity cone $K_p$ does not admit an $\eta-$approximate spectrahedral representation of dimension $B \leq \left(n/ d\right)^{\kappa d}$, for $\eta=1/n^{4nd}$. 
\end{theorem}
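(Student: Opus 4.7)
My plan is to exhibit a large family of hyperbolic polynomials clustered around the elementary symmetric polynomial $e_d$ whose hyperbolicity cones are pairwise Hausdorff-distant, and then apply a metric-entropy argument on the space of spectrahedral cones of dimension $B$ to force the dimension lower bound. The proof proceeds in three steps: produce a large subspace of hyperbolicity-preserving perturbations of $e_d$; use the quantitative consecutive-zeros ingredient to show that distinct perturbations give pairwise distant cones; and conclude via a net bound on spectrahedra.

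The first step, and almost certainly the main obstacle, is to identify a linear subspace $V \subseteq \R[x_1,\ldots,x_n]_d$ of dimension $D = (n/d)^{\Omega(d)}$ and a radius $\rho > 0$ such that every polynomial $e_d + q$ with $q \in V$ and $\|q\| \le \rho$ remains hyperbolic with respect to $e = \one$; equivalently, that $e_d$ lies in the relative interior of the set of hyperbolic polynomials inside the affine slice $e_d + V$. The set of hyperbolic polynomials is a poorly-understood semialgebraic set, so one must exploit the very specific combinatorial structure of $e_d$ --- its $S_n$-invariance, and the explicit description of the roots of the univariate restrictions $e_d(te + x)$ along generic lines --- together with a quantitative stability estimate for real-rootedness. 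A natural candidate for $V$ is the span of certain symmetrized low-degree monomials, and one verifies stability by controlling how far the restricted roots are from collision.

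Next, inside the $\rho$-ball of $V$ I would pick a packing $\{q_\alpha\}_{\alpha \in A}$ and show that the cones $K_{e_d + q_\alpha}$ are pairwise at Hausdorff distance at least $2\eta$. For each pair $\alpha \ne \beta$, one locates a point $x$ on the boundary of $K_{e_d}$ at which two consecutive coefficients of $e_d(te + x)$ vanish, and invokes the quantitative ``consecutive-zeros'' lemma advertised in the abstract. That lemma sharpens the classical fact that a real-rooted polynomial with two consecutive zero coefficients must have a high-multiplicity root at zero: if instead of zero those coefficients are of size $\epsilon$, then some real root is forced to move by at least an explicit $\mathrm{poly}(\epsilon)$ amount. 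Since $q_\alpha$ and $q_\beta$ perturb these two coefficients in distinct ways, the corresponding boundary shifts distinguish the two cones quantitatively.

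The final step is a standard covering argument. After suitable normalization, spectrahedral cones of dimension $B$ in $\R^n$ are parameterized by an $O(nB^2)$-dimensional space of $n$-tuples of symmetric $B \times B$ matrices, with the map to cones polynomially Lipschitz in Hausdorff distance; hence the Hausdorff $\eta$-entropy satisfies $\log N_B(\eta) = O\bigl(nB^2 \log(n/\eta)\bigr)$. An $\eta$-approximate spectrahedral representation must assign a distinct net element to each of the $|A|$ cones, so $|A| \le N_B(\eta)$. Substituting the packing number from Step 2 and $\eta = 1/n^{4nd}$ and solving for $B$ yields $B \ge (n/d)^{\kappa d}$ for a suitable $\kappa > 0$. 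The numerical balance here must be arranged so that the perturbation radius $\rho$ and the root-shift bound from Step 2 comfortably dominate $\eta$; the conceptual difficulty is concentrated in the hyperbolicity-preservation claim of Step 1.
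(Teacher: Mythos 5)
Your three-step architecture matches the paper exactly: a large hyperbolicity-preserving subspace of perturbations of $e_d$, pairwise Hausdorff-distant cones indexed by a hypercube in that subspace, and a volume/packing bound on normalized spectrahedral representations. Step 3 as you describe it is essentially the paper's argument (normalize so $\sum_i C_i = \mathrm{Id}$ and $C_i \succeq 0$, then pack in Frobenius norm). But there are two gaps in the middle that hide most of the work.

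First, the choice of subspace. The paper does not use symmetrized monomials; it uses the span of \emph{matching polynomials} $q_M(x) = \prod_{ij\in M}(x_i - x_j)$, chosen precisely because they are translation-invariant: $q_M(t\one + x) = q_M(x)$ is a \emph{constant} in $t$. This collapses every univariate restriction of the perturbation to a constant shift, which is what makes both the hyperbolicity-preservation estimate and the cone-separation estimate tractable. The paper additionally proves a probabilistic ``unique-crossing'' selection (Lemma \ref{lem:manymatchings}): a subfamily $\M_d$ of size $(n/d)^{\Omega(d)}$ together with a marker family $\S_d$ of $d$-sets so that each $S\in\S_d$ has $q_M(\one_S)\neq 0$ for exactly one $M\in\M_d$. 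This decoupling is what lets you compare two binary perturbations $s\neq s'$ along a single marker restriction without cancellation. A generic subspace of the same dimension would not enjoy either property, and without them your Step 2 has no concrete mechanism.

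Second, you have the consecutive-zeros ingredient in the wrong step. In the paper it is the engine of Step 1, not Step 2. Lemma \ref{lem:edmd} (if $e_{d-1}(x)=0$ then $|e_d(x)| \geq \frac{\binom{n}{d}}{(2n^{d+1})^{n-d}} m_d(x)$) is used inside the proof of Theorem \ref{thm:perturb} to lower-bound the distance from a restriction $e_d(t\one+x)$ to the boundary of the set of real-rooted univariate polynomials, at the critical point where the derivative vanishes; translation-invariance then reduces the condition to controlling $|q(x)|$ by $|e_d(x)|$ at that same $x$. The cone-separation step (Lemmas \ref{lem:restrictfar} and \ref{lem:restrictiontocone}) is entirely different: along the marker restriction $t\one + \one_S$ the unperturbed polynomial becomes the fixed Jacobi-type polynomial $J(t) = \frac{1}{(n-d)!}D^{n-d}[t^{n-d}(t+1)^d]$, the perturbation subtracts the constant $\epsilon$ exactly when the unique crossing matching is switched on, and one bounds the shift of the largest root of $J - \epsilon$ by convexity and an explicit lower bound $J'(\lambda) \geq n^{-d(n-d)}$ on the derivative at the top root. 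No consecutive-zeros fact is invoked there, and your suggestion of locating $x\in\partial K_{e_d}$ where two consecutive coefficients of $e_d(te+x)$ vanish does not obviously lead anywhere: that is a higher-codimension locus, and a shift in coefficients near a high-multiplicity root at $0$ does not straightforwardly yield a uniform lower bound on the Hausdorff movement of the cone.

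In short, the skeleton is right and Step 3 is correct, but the concrete mechanisms for Steps 1 and 2 --- translation-invariant matching polynomials, the unique-crossing combinatorics, the consecutive-zeros lemma placed in Step 1 rather than Step 2, and the explicit Jacobi-root sensitivity estimate for Step 2 --- are the actual content of the proof and are missing or misattributed in the proposal.
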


Our proof is analytic and does not rely on algebraic obstacles to
representability; in fact the polynomials we construct have very simple
coefficients (they are essentially binary perturbations of $e_d$).  However,
since there are $\binom{n}{d}$ coefficients, the examples require $\Omega(n^d)$
bits to write down. It is still unknown whether $e_d$ itself admits a low dimensional spectrahedral
representation, though Saunderson and Parrilo \cite{saunderson2015polynomial} have shown that if one allows
{\em projections} of sections of semidefinite cones, then such a represntation 
exists with size $\mathrm{poly}(n,d)$. 

Algebraically, the notion of (exact) spectrahedral representation of a
hyperbolicity cone $K_p$ for a polynomial $p$ corresponds to an algebraic
identity of the form, \[ p (x) \cdot q(x) = det ( \sum_{i \in [n]} C_i x_i) \ ,
\] where $K_q$ contains $K_p$.  Thus, our main theorem implies the existence of
a degree $d$ polynomial $p$ such that the degree of any identity of the above
form is at least $(n/d)^{\kappa d}$.

\subsection{Proof Overview}

The starting point for our proof is the theorem of Nuij \cite{nuij1969note},
which says that the space of hyperbolic polynomials of degree $d$ in $n$
variables has a nonempty interior, immediately implying that it has dimension
$n^d$. The Generalized Lax Conjecture concerns the {\em cones} of these
polynomials, which are geometric rather than algebraic objects. If we could
show that this space of cones also has large ``dimension'' in some appropriate quantitative
sense, and that the maps between the hyperbolicity cones and their spectrahedral representations are 
suitably well-behaved, then it would rule out the existence of small spectrahedral
representations for all of them uniformly, since such representations are parameterized
by tuples of $n$  $B\times B$ matrices, which have dimension $nB^2$. 

The difficulties in turning this idea into a proof are: (1) There are no
quantitative bounds on Nuij's theorem. (2) the space of hyperbolicity cones is
hard to parameterize and it is not clear how to define dimension. (3) the
mapping from hyperbolicity cones to their representations can be arbitrary, and
needn't preserve the usual notions of dimension anyway. We surmount these
difficulties by a packing argument, which consists of the following steps.

\begin{figure}
	\centering
	\includegraphics[width=0.8\textwidth]{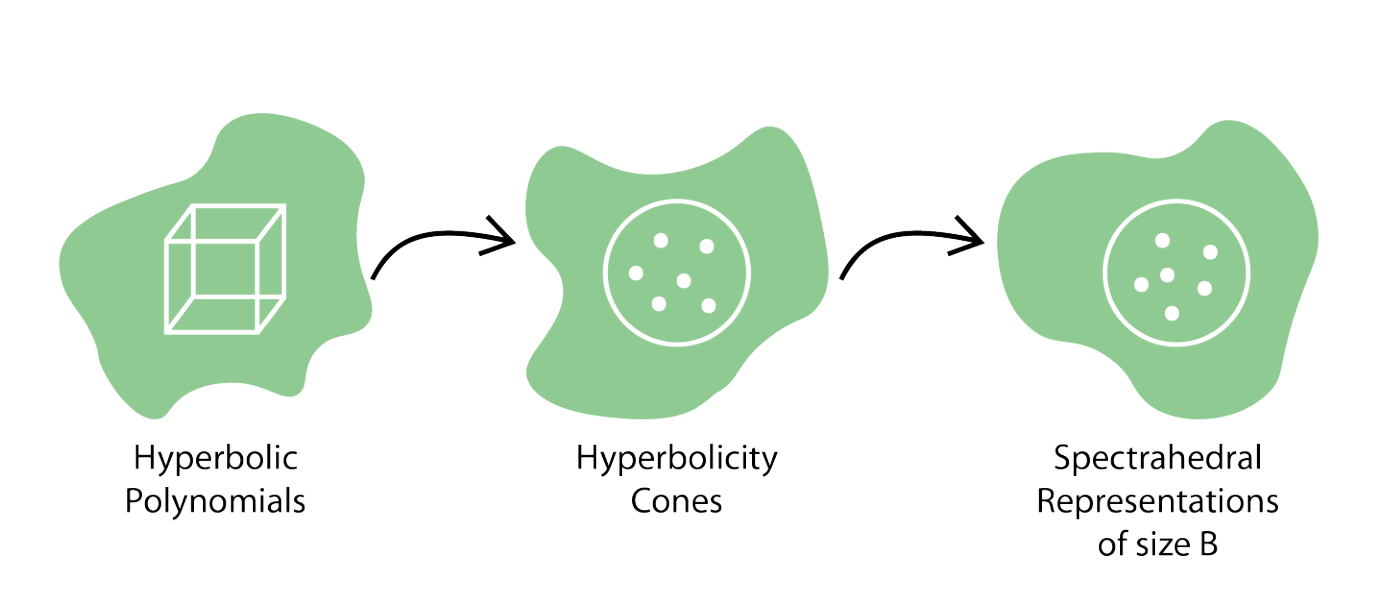}
	\caption{Outline of the Proof}
\end{figure}

\begin{enumerate}
	\item Exhibit a large family of hyperbolicity cones of size
		$2^{(n/d)^{\Omega(d)}}$, every pair of which are at least
		$\epsilon$ apart from each other in the Hausdroff metric
		between the cones.  

\item  Show that the spectrahedral representations of two distant cones $K$ and
	$K'$ are distant from each other, once the representations are
		appropriately normalized.  Formally, the matrices $\{C_i\}_{i
		\in [n]}$ and $\{C'_i\}_{i \in [n]}$ representing the two cones
		are at least $\epsilon'$ away in operator norm if $\hdist(K,K')
		\geq \epsilon$ (see Lemma \ref{lem:conetomatrices}). 

		We work only with cones which contain the positive orthant in order to ensure
		that they have normalized representations.

\item By considering the volume, there is a $(B/\epsilon')^{nB^2}$ upper bound
	on the number of pairwise $\epsilon'$-distant spectrahedral
		representations in $B\times B$ matrices, thus giving the lower
		bound on $B$.

\end{enumerate}

By far, the most technical part of the proof is the first step exhibiting a large family of pairwise distant hyperbolicity cones.  
The set of hyperbolic polynomials is known to have a non-empty interior in the
set of all degree $d$ homogenous polynomials.  Although this implies the
existence of a full-dimensional family of hyperbolic polynomials, it is not
clear how far apart they are quantitatively. Moreover, without any further understanding
of the structure of the polynomials, it is difficult to argue that their cones are far
in the Hausdorff metric.
To this end, we work with an explicit family of hyperbolic polynomials which
are all perturbations of the elementary symmetric polynomials, whose cones we
are able to understand.  Specifically, we will show the following:

\begin{enumerate}
\item There exists an explicit family $\mathcal{P}$ of $2^{\Omega(\binom{n}{d}
	)}$ perturbations of the degree $d$ elementary symmetric polynomial $e_d(x) = \sum_{S, |S| \leq d} \prod_{i \in S} x_i$ that are all hyperbolic, and pairwise distant from each other. These perturabations are indexed by a hypercube of dimension $\Omega(\binom{n}{d})$, as depicted in Figure 1.
The subspace of perturbations is carefully chosen to preserve real-rootedness
		of all the restrictions, thereby preserving the hyperbolicity
		of the polynomial $e_d(x)$ (see Section \ref{sec:many-ed}), as
		well as to yield perturbations of an especially simple
		structure.

\item The hyperbolicity cones for every pair of polynomials in $\mathcal{P}$ are far from each other.  In order to lower bound the Hausdroff distance between these cones, we identify an explicit set of $\Omega(\binom{n}{d})$ points on the boundary of the hyperbolicity cone for $e_d(x)$ as markers.  We will lower bound the perturbation of these markers as the polynomial is perturbed, in order to argue that the corresponding hyperbolicity cone is also perturbed (see Section \ref{sec:conesfar}). Again, the structure of the chosen subspace ensures that there are no ``interactions'' between the markers, and the analysis is reduced to understanding the perturbation of a single univariate Jacobi polynomial.
\end{enumerate}

\section{Many Hyperbolic Perturbations of $e_d$} \label{sec:many-ed}
In this section we prove that even though $e_d(x_1,\ldots,x_n)$ is not in the
interior of the set of Hyperbolic polynomials, there is a large subspace
$\Pi_d$ of homogeneous polynomials of degree $d$ in $n$ variables such
that all sufficiently small perturbations of $e_d$ in this subspace remain
hyperbolic. 

The subspace will be spanned by certain homogeneous polynomials corresponding to matchings. For any matching $M$ containing $d$ edges on $\{1,\ldots,n\}$, define the polynomial
$$q_M(x_1,\ldots,x_n):=\prod_{i<j\in M}(x_i-x_j).$$
We say that a $d-$matching on $[n]$ {\em fully crosses} a $d-$subset $S$ of $[n]$ if every edge of $M$ has exactly one endpoint in $S$.
\renewcommand{\S}{\mathcal{S}}
\begin{lemma}[Many Uniquely Crossing Matchings] \label{lem:manymatchings} There is a set $\M_d$ of $d-$matchings on $[n]$ of size at least 
$$|\M_d|=:N\ge \frac{\binom{n}{d}}{4\cdot 2^d}$$
and a set of $d-$subsets $\S_d$ such that for every $S\in\S_d$ there is a {\em unique} matching $M\in\M_d$ which fully crosses it, and for every $M\in\M_d$ there is at least one $S\in\S_d$ which it fully crosses.

Moreover, for every indicator vector $\one_S$, $S\in \S_d$ there is a unique $M\in\M_d$ such that $q_M(\one_S)\neq 0$, so the dimension of the span of 
$$\Pi_d:=\{q_M:M\in\M_d\}$$ is exactly $N$.
\end{lemma}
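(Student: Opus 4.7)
The plan is a probabilistic construction. Independently include each $d$-matching on $[n]$ in a random family $\widetilde{\M}_d$ with probability $p = 1/\bigl(d!\binom{n-d}{d}\bigr)$. Then define $\S_d$ to be the collection of $d$-subsets $S \subseteq [n]$ that are fully crossed by exactly one matching in $\widetilde{\M}_d$, and let $\M_d \subseteq \widetilde{\M}_d$ consist of those matchings fully crossing at least one $S \in \S_d$. Both combinatorial properties in the statement then hold by construction: the unique crosser $M_S \in \widetilde{\M}_d$ of any $S \in \S_d$ is never pruned (since $M_S$ itself crosses $S \in \S_d$), and every $M \in \M_d$ fully crosses some $S \in \S_d$ by definition. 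The task reduces to lower bounding $\mathbb{E}[|\M_d|]$.

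Two elementary counts anchor the argument: each $d$-matching fully crosses exactly $2^d$ subsets (one per choice of endpoint per edge), and each $d$-subset is fully crossed by exactly $d!\binom{n-d}{d}$ matchings, so the choice of $p$ gives $\mathbb{E}[|\widetilde{\M}_d|] = \binom{n}{d}/2^d$. To show that a constant fraction of $M \in \widetilde{\M}_d$ survive pruning, I would fix $M$, let $Y_i$ indicate that the $i$-th subset $S_i$ crossed by $M$ is uniquely crossed in $\widetilde{\M}_d$, and set $Z = \sum_{i=1}^{2^d} Y_i$. Then $\mathbb{E}[Y_i \mid M \in \widetilde{\M}_d] = (1-p)^{d!\binom{n-d}{d}-1} \approx e^{-1}$, so $\mathbb{E}[Z \mid M \in \widetilde{\M}_d] \approx 2^d/e$. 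The goal is to establish $\mathbb{E}[Z^2] = (1+o(1))\mathbb{E}[Z]^2$, so that Paley--Zygmund yields a survival probability bounded below by a constant (at least $1/2$), and hence $\mathbb{E}[|\M_d|] \geq \binom{n}{d}/(4 \cdot 2^d)$.

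The main technical hurdle is this variance bound. The correlation factor is $\mathbb{E}[Y_i Y_j]/(\mathbb{E}[Y_i]\mathbb{E}[Y_j]) = (1-p)^{-|U_{ij}|}$, where $U_{ij}$ is the set of matchings other than $M$ that fully cross both $S_i$ and $S_j$. If $S_i, S_j$ disagree on exactly $k$ edges of $M$ (equivalently $|S_i \triangle S_j| = 2k$), a short case analysis shows that any $M' \in U_{ij}$ must use all $k$ pairs as $(S_i \setminus S_j)$-$(S_j \setminus S_i)$ bipartite edges, and pair the $d-k$ remaining vertices of $S_i \cap S_j$ arbitrarily with vertices of $\overline{S_i}\cap\overline{S_j}$. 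This gives $|U_{ij}|/|U_i| \leq \bigl(d/(n-2d)\bigr)^k/\binom{d}{k}$, and summing over the $\binom{d}{k}$ choices of $S_j$ at Hamming distance $2k$ from $S_i$ shows that for $n$ exceeding a sufficient constant multiple of $d$, the off-diagonal contribution to $\mathbb{E}[Z^2]$ is $(1+o(1))\,2^{2d}\mathbb{E}[Y_i]^2$. The most delicate aspect will be tracking the constants in the geometric-series tails carefully enough to extract the explicit factor of $1/4$; qualitatively, correlations decay fast enough that $Z$ behaves like a sum of nearly independent indicators.

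The ``moreover'' assertion is immediate: $q_M(\one_S) = \prod_{(i,j) \in M}(\one_S(i) - \one_S(j))$ is nonzero iff every edge of $M$ has exactly one endpoint in $S$, i.e., iff $M$ fully crosses $S$. For $S \in \S_d$ the unique $M \in \M_d$ with $q_M(\one_S) \neq 0$ is therefore $M_S$, and since every $M \in \M_d$ fully crosses some $S \in \S_d$, the evaluation functionals $q \mapsto q(\one_S)$ for $S \in \S_d$ separate the $q_M$'s. Hence $\{q_M : M \in \M_d\}$ is linearly independent and $\dim \mathrm{span}(\Pi_d) = |\M_d| = N$.
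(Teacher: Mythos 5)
Your random construction (include each $d$-matching independently with probability $p=1/E$ where $E=d!\binom{n-d}{d}$, take $\S_d$ to be the uniquely crossed sets, prune $\M_d$ to the matchings crossing some good set) coincides with the paper's, and your ``moreover'' paragraph is correct. But you take a genuinely different route to the cardinality bound, and as written it has a real gap. You aim to show that each $M\in\widetilde{\M}_d$ survives pruning with probability at least $1/2$ by applying Paley--Zygmund to $Z=\sum_{i=1}^{2^d}Y_i$, which requires establishing $\mathbb{E}[Z^2]\le 2\,\mathbb{E}[Z]^2$. You correctly reduce this to controlling $(1-p)^{-|U_{ij}|}$ and correctly bound $|U_{ij}|$, but the computation that actually closes the argument is not carried out: you need to sum $\binom{d}{k}\bigl((1-p)^{-|U_k|}-1\bigr)$ over $k$ and show it is $O(2^d)$ with a small constant, and you need to account for the diagonal term, which by itself already contributes a factor $1+(e-1)/2^d$ to $\mathbb{E}[Z^2]/\mathbb{E}[Z]^2$ (so the ``$(1+o(1))$'' asymptotic only makes sense with $d\to\infty$, which you never state). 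You flag this as ``the main technical hurdle,'' and indeed it is the crux of your proof; an unproved crux is a gap. The pieces you have are correct and the approach is salvageable, but it is not a proof.

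The paper sidesteps the second moment entirely with a first-moment-plus-double-counting argument that you could adopt verbatim within your construction. Instead of lower-bounding the survival probability of a matching, lower-bound $\mathbb{E}[|\S_d|]$ directly: with inclusion probability $\alpha/E$ and $\alpha=1/2$, a one-line computation gives $\mathbb{E}[|\S_d|]\ge\binom{n}{d}\alpha(1-\alpha(1-1/E))\ge\binom{n}{d}/4$, so some realization has at least this many good subsets. Then observe that, after pruning, each $S\in\S_d$ is crossed by exactly one matching in $\M_d$ and each matching crosses at most $2^d$ subsets, so the map $S\mapsto M_S$ from $\S_d$ to $\M_d$ has fibers of size at most $2^d$, giving $|\M_d|\ge|\S_d|/2^d\ge\binom{n}{d}/(4\cdot 2^d)$ deterministically. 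This avoids all correlation analysis, requires no asymptotic hypothesis beyond $n\ge 2d$, and proves exactly the stated bound. Your Paley--Zygmund route would, if completed, give a marginally better constant ($1/2$ in place of $1/4$) at the cost of the variance estimate; the first-moment version is both shorter and sufficient.
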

\begin{proof} Let $\M$ denote the set of all $d-$matchings on $[n]$ and let $\S$ be the set of all $d-$subsets of $[n]$. Let 
$$E:=\binom{n-d}{d}\cdot d!$$
be the number of matchings fully crossing a fixed set $S\in\S$. Let $\M_d$ be a random subset of $\M$ in which each matching is included independently with probability $\alpha/E$ for some $\alpha\in (0,1)$ to be determined later, and let $X_M$ be the indicator random variable that $M\in\M_d$. For any set $S$, define the random variable
\renewcommand{\deg}{\mathrm{deg}}
\newcommand{\E}{\mathbb{E}}
\renewcommand{\P}{\mathbb{P}}
$$\deg(S): = \sum_{M\textrm{ fully crossing }S} X_M,$$
and observe that
$$\E \deg(S) = \alpha.$$
Call a set $S$ {\em good} if $\deg(S)=1$ and let $G$ be the number of good $S$. Observe that
$$\E G = \binom{n}{d}\cdot\P[\deg(\{1,\ldots,d\}=1] = \binom{n}{d}\cdot E\cdot (1-(\alpha/E))^{E-1}\cdot\frac{\alpha}{E}\ge \binom{n}{d}\alpha(1-\alpha\cdot(1-1/E)).$$ 
Setting $\alpha=1/2$ we therefore have
$$\E G \ge \binom{n}{d}/4,$$
so with nonzero probability there are at least $\binom{n}{d}/4$
good subsets.

Let $\S_d$ be the set of good subsets. Finally, remove from $\M_d$ all the matchings that do not fully cross a good subset. Since there are at least $(1/4)\binom{n}{d}$ good subsets, every good subset fully crosses at least one matching in $\M_d$, and at most $2^d$ subsets cross any given matching, the number of matchings whichremain in $\M_d$ is at least $$(1/4)\binom{n}{d}/2^d,$$
as desired.

The moreover part is seen by observing that $q_M(\one_S)\neq 0$ if and only if $M$ fully crosses $S$.
\end{proof}
\begin{remark} In fact, the dimension of the span of all of the polynomials $q_M$ is exactly $\binom{n}{d}-\binom{n}{d-1}$, by properties of the Johnson Scheme, but in order to obtain the additional property above we have restricted to $\M_d$.\end{remark}

Henceforth we fix $\{q_M:M\in\M_d\}$ to be a basis of $\Pi_d$ and define the
$\ell_1$ norm of a polynomial $$q = \sum_{M\in\M_d} s_M q_M\in \Pi_d$$ as
$\|q\|_1:=\|s\|_1$. We will use the notation
$$m_d(x_1,\ldots,x_n)=\max_{|S|=d}\prod_{i\in S}|x_i|,$$ to denote the product
of the $d$ largest entries of a vector in absolute value, and occasionally we
will write $e_d(p)$ and $m_d(p)$ for a real-rooted polynomial $p$, which means applying
$e_d$ or $m_d$ to its roots. The operator $D$ refers to differentiation with respect to $t$.

The main theorem of this section is:
 \begin{theorem}[Effective Relative Nuij Theorem for $e_d$]\label{thm:perturb} If $q\in\Pi_d$ satisfies $$\|q\|_1\le
	 \frac{\binom{n}{d}}{2^n\cdot n^{(d+1)(n-d)}}=:R$$
	 then $e_d+q$ is hyperbolic with respect
 to $\one$.\end{theorem}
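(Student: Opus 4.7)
My plan is a continuous deformation argument. Consider the family $p_s := e_d + s q$ for $s \in [0,1]$, with $p_0 = e_d$ hyperbolic with respect to $\one$. If $p_1$ failed to be hyperbolic, continuity of the roots of univariate restrictions together with closedness of the set of polynomials hyperbolic with respect to $\one$ would produce a least $s^\ast \in (0,1]$ at which $p_{s^\ast}$ is on the boundary of the hyperbolicity region. At such a boundary polynomial there must exist $y \in \R^n$ not parallel to $\one$ such that $p_{s^\ast}(y + t\one)$ acquires a double real root in $t$; translating along $\one$ we may place the double root at $t = 0$, yielding
\[
p_{s^\ast}(y) \;=\; 0, \qquad (\partial_\one p_{s^\ast})(y) \;=\; 0.
\]

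The key structural feature of $\Pi_d$ that I would use is that each generator $q_M = \prod_{(i,j)\in M}(x_i - x_j)$ depends only on differences of coordinates, hence is invariant under $x \mapsto x + c\one$, so every $q \in \Pi_d$ satisfies $\partial_\one q = 0$. Since $\partial_\one e_d = (n-d+1) e_{d-1}$, the tangency conditions collapse to
\[
e_{d-1}(y) \;=\; 0, \qquad e_d(y) \;=\; -\,s^\ast\, q(y).
\]
Using homogeneity I normalize $\|y\|_\infty = 1$; then $|q(y)| \le \|q\|_1 \cdot \max_M |q_M(y)| \le 2^d \|q\|_1$, so combined with $|s^\ast|\le 1$ the second tangency condition forces $|e_d(y)| \le 2^d \|q\|_1 \le 2^d R$, an extremely small quantity for $y$ lying on the variety $\{e_{d-1}=0\}$.

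The decisive step is therefore a lower bound on $|e_d(y)|$ for $y$ with $\|y\|_\infty = 1$ and $e_{d-1}(y) = 0$, and this is where I expect the main technical difficulty to lie. My plan is to view the univariate polynomial $u_y(s) = \prod_i (s - y_i)$, which is real-rooted of degree $n$ with coefficients $\pm e_j(y)$, and to apply a quantitative strengthening of the classical fact that a real-rooted polynomial with two consecutive zero coefficients must have a high-multiplicity root at zero. Here one coefficient ($\pm e_{d-1}(y)$) is exactly zero and its neighbour ($\pm e_d(y)$) is forced to be tiny, and the quantitative version should conclude that many of the $y_i$ must lie in a correspondingly small window around $0$. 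I expect the factor $n^{(d+1)(n-d)}$ in $R$ to come precisely from the slack in such a statement, proved by iterating Newton-type inequalities and Rolle's theorem on $u_y$ and its derivatives so as to propagate the smallness of $e_{d-1}, e_d$ down through the coefficients $e_{d+1}, \ldots, e_n$.

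Once $y$ is forced to have a set $I \subseteq [n]$ of at least $n - d + 1$ coordinates that are uniformly tiny, a pigeonhole argument on any $d$-matching $M \in \M_d$ gives $|V(M)\cap I| \ge 2d - (n-|I|) \ge d+1$, so some edge $(i,j)\in M$ has both endpoints in $I$. The corresponding factor $|y_i - y_j|$ in $q_M(y)$ is then tiny, yielding a bound on $|q(y)|$ much sharper than the naive $2^d\|q\|_1$, and incompatible with $e_d(y) = -s^\ast q(y)$. Combining these estimates with the hypothesis $\|q\|_1 \le R$ contradicts the existence of the tangency $(s^\ast, y)$, and hence $e_d + q$ is hyperbolic with respect to $\one$.
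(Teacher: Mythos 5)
Your high-level framework agrees with the paper's: both proofs exploit the translation invariance of $q\in\Pi_d$ along $\one$ to reduce to the tangency conditions $e_{d-1}(y)=0$ and $e_d(y)=-s^*q(y)$ at a hypothetical double root, and both identify the crux as a quantitative ``two consecutive vanishing coefficients'' statement for real-rooted polynomials (the paper's Lemma~\ref{lem:edmd}). Your deformation-to-the-boundary argument is an equivalent rephrasing of the paper's direct bound on the gap $\gamma=\min_{p'(t)=0}|p(t)|$ for each fixed restriction. (Incidentally, you are right that $\partial_\one e_d=(n-d+1)e_{d-1}$; the paper's ``$d\cdot e_{d-1}$'' is a harmless typo.)

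However, there is a genuine gap in the normalization. You fix $\|y\|_\infty=1$ and seek a uniform lower bound on $|e_d(y)|$ over the variety $\{e_{d-1}=0\}$, but no such bound exists: $y=(1,0,\dots,0)$ has $\|y\|_\infty=1$, $e_{d-1}(y)=0$, and $e_d(y)=0$ for $d\ge 2$. Your proposed patch via pigeonhole does not close this. If $m_d(y)=\mu$ and $\|y\|_\infty=1$, the $n-d+1$ smallest coordinates only have magnitude $\lesssim\mu^{1/(d-1)}$, and the edge of $M$ that lands inside this tiny set contributes a single factor of size $\lesssim\mu^{1/(d-1)}$; the remaining $d-1$ factors are only bounded by $2$, so the resulting estimate is $|q(y)|\lesssim 2^d\mu^{1/(d-1)}\|q\|_1$. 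The key lemma, in turn, gives $|e_d(y)|\gtrsim\mu$ (not $\mu^{1/(d-1)}$). For $d\ge 3$ the required inequality $\mu\gtrsim\mu^{1/(d-1)}R$ fails as $\mu\to 0$, so no finite $R$ closes the argument. The paper avoids this entirely by \emph{not} normalizing by $\|y\|_\infty$, and instead observing that the factorized structure of $q_M=\prod_{(i,j)\in M}(y_i-y_j)$ gives the much sharper bound
\[
|q_M(y)|\ \le\ 2^d\prod_{(i,j)\in M}\max(|y_i|,|y_j|)\ \le\ 2^d\,m_d(y),
\]
since the $d$ maxes are products over $d$ \emph{distinct} indices. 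This makes $|q(y)|\le 2^d\|q\|_1 m_d(y)$ scale linearly in $m_d(y)$, matching the $m_d(y)$ scaling in Lemma~\ref{lem:edmd}, so the comparison holds uniformly over all $y$ (and degenerates gracefully to $0\ge 0$ when $m_d(y)=0$). You would need to replace your $\|\cdot\|_\infty$ normalization and pigeonhole step with this $m_d$-scaled bound to make your proof go through.
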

Recalling the definition of hyperbolicity, our task is to show that all of the
restrictions $$t\mapsto e_d(t\one+x),\quad x\in\R^n$$ remain real-rooted after perturbation by $q$.
Many of these restrictions lie on the boundary of the set of (univariate) real-rooted polynomials, 
 or arbitrarily close to it, so it is not possible to simply discretize the set of $x$ by a net
 and choose $q$ to be uniformly small on this net; one must instead carry out a more delicate restriction-specific analysis
 which shows that for small $R$, the perturbation $q(t\one +x)$ is less than the distance of $e_d(t\one + x)$
 to the boundary of the set of real-rooted polynomials, for each $x\in\R^n$.
Since we are comparing vanishingly small quantities, it is not a priori 
  clear that such an approach will yield an effective bound on $R$ depending only
  on $n$ and $d$; Lemma \ref{lem:edmd} shows that this is indeed possible.
\begin{proof}[Proof of Theorem \ref{thm:perturb}]
 Fix any nonzero vector $x\in\R^n$ and perturbation $q\in\Pi_d$ with $\|q\|_1\le R$ and consider the perturbed restriction:
 $$ r(t):= e_d(t\one+x)+q(t\one+x).$$
Let $p(t):=e_d(t\one+x)$ and observe that since $q$ is translation invariant, we have $q(t\one+x)=q(x)$, so in fact
$$r(t)=p(t)+q(x).$$
Let $\gamma\ge 0$ be the largest constant such that $p(t)+\delta$ is real-rooted for all $\delta\in[-\gamma,\gamma]$ (note that $\gamma$ could be zero if $p$ has a repeated root). It is sufficient to show that $|q(x)|\le \gamma$. Observe that 
\begin{equation}\label{eqn:gap}\gamma = \min_{t:p'(t)=0}|p(t)|,\end{equation}
since the boundary of the set of real-rooted polynomials consists of polynomials with repeated roots, and at any double root $t_0$ of the $p+\gamma$ we have $p'(t_0)=(p+\gamma)'(t_0)=(p+\gamma)(t_0)=0$. Let $t_0$ be the minimizer in \eqref{eqn:gap} and replace $x$ by $x-t_0\one$, noting that this translates $r(t)$ to $r(t-t_0)$ and does not change $\gamma$ or $q(x)$, so that we now have:
$$p'(0)=d\cdot e_{d-1}(x)=0$$
and
$$\gamma = |p(0)|=|e_d(x)|.$$

On the other hand, observe that:
\begin{align*}
|q(x)| &\le\sum_{M\in\M_d} |s_M|\prod_{ij\in M}|x_i-x_j|
\\&\le \|q\|_1\cdot\max_{M\in\M_d}\prod_{ij\in M}(|x_i|+|x_j|)
\\&\le \|q\|_1\cdot 2^d\cdot m_d(x).
\end{align*}
Thus we have $\gamma\ge |q(x)|$ as long as $m_d(x)=0$ or
$$ \|q\|_1\le \frac{|e_d(x)|}{2^dm_d(x)},$$
which is implied by
	$$ \|q\|_1\le \frac{\binom{n}{d}}{2^d(2n^{d+1})^{(n-d)}}$$
by Lemma \ref{lem:edmd}, as advertised.
\end{proof}

The following lemma may be seen as a quantitative version of the fact that if a real-rooted polynomial
has two consecutive zero coefficients $e_d=e_{d-1}=0$ then it must have a root of multiplicity $d+1$ at zero.
\begin{lemma}\label{lem:edmd} If $x\in\R^n$ satisfies $e_{d-1}(x)=0$ then
	$$ |e_d(x)|\ge \frac{\binom{n}{d}}{(2n^{d+1})^{(n-d)}}|m_d(x)|.$$
\end{lemma}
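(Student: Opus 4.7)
The plan is to exploit the real-rootedness of $p(t):=\prod_i(t-x_i)$ together with the identity $e_{d-1}(x)=0$ on its coefficients. The central object is the iterated derivative $q(t):=p^{(n-d)}(t)$, which is real-rooted of degree $d$ by Rolle's theorem. Matching coefficients gives $q(0)=(-1)^d e_d(x)(n-d)!$ and $q'(0)=(-1)^{d-1}e_{d-1}(x)(n-d+1)!=0$, so $0$ is a critical point of the real-rooted polynomial $q$, and the target inequality reduces to a lower bound on $|q(0)|$ in terms of $m_d(x)$.

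First I would dispose of the degenerate case $e_d(x)=0$, by showing it forces $m_d(x)=0$. The key fact is that for any real-rooted $f=c\prod_i(t-r_i)$ with $f(0)\ne 0$, the logarithmic-derivative identity
\[\frac{f''(0)}{f(0)} \;=\; \Bigl(\frac{f'(0)}{f(0)}\Bigr)^{\!2} - \sum_i\frac{1}{r_i^{2}}\]
shows that $f'(0)=0$ forces $f''(0)/f(0)=-\sum 1/r_i^2<0$; contrapositively, $f'(0)=f''(0)=0$ implies $f(0)=0$. Applying this to $f=p^{(n-d-1)}$ (whose first and second Taylor coefficients at $0$ are proportional to $e_d(x)$ and $e_{d-1}(x)$) gives $e_{d+1}(x)=0$; iterating to $p^{(n-d-2)},\,p^{(n-d-3)},\ldots$ (each step folding in the previously established vanishings) yields $e_{d+1}(x)=\cdots=e_n(x)=0$. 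Hence $p(t)=t^{\,n-d+2}r(t)$, so at most $d-2$ entries of $x$ are nonzero and $m_d(x)=0$.

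For the main case $e_d(x)\ne 0$, I would argue by induction on $n-d$. The base case $n=d$ is immediate: either some $x_i=0$, or $e_{d-1}(x)=0$ reads $\sum 1/x_i=0$ and $|e_d(x)|=\prod|x_i|=m_d(x)$, matching the stated constant $\binom{d}{d}/(2d^{d+1})^{0}=1$. For the inductive step, pass to the derivative polynomial $p'(t)=n\prod_j(t-y_j)$, whose roots satisfy $e_k(y)=\frac{n-k}{n}e_k(x)$; this preserves $e_{d-1}(y)=0$ and gives $|e_d(x)|=\frac{n}{n-d}|e_d(y)|$. The inductive hypothesis in $n-1$ variables then yields $|e_d(x)| \ge \frac{n}{n-d}\cdot \frac{\binom{n-1}{d}}{(2(n-1)^{d+1})^{n-1-d}}\, m_d(y)$. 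The main obstacle, and the crux of the proof, is a quantitative comparison $m_d(y)\ge m_d(x)/(2n^{d+1})$ (up to the absolute constants needed to recover the stated bound): interlacing alone only guarantees $|y_j|\le \max_i|x_i|$ and generically permits the top-$d$ values of $|y_j|$ to collapse near zero. One must use the inherited constraint $e_{d-1}(y)=0$ (equivalently $\sum_j 1/y_j=0$ when $e_d(y)\ne 0$) to rule out such collapse, plausibly via a case analysis on the sign pattern and magnitudes of the $x_i$'s. An alternative route would bypass induction by directly lower-bounding $|q(0)|=(n!/d!)\prod_i|y^{(d)}_i|$ using real-rootedness of $q$, the constraint $q'(0)=0$, and the iterated interlacing of the roots of $p^{(n-d-j)}$ with those of $p$ for $j=0,1,\ldots,n-d$.
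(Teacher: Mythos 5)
Your overall skeleton matches the paper's: pass to iterated derivatives $p^{(n-d-j)}$, track how $e_d$ and $m_d$ behave under differentiation, and recover the claim from Rolle-type interlacing. Your treatment of the degenerate case $e_d(x)=0$ via the logarithmic-derivative identity is a clean alternative to the paper's density argument and is a nice touch. The coefficient bookkeeping in your inductive step (using $e_k(y)=\frac{n-k}{n}e_k(x)$ and $\frac{n}{n-d}\binom{n-1}{d}=\binom{n}{d}$) is correct and does telescope to the stated constant, \emph{provided} you can establish the per-step estimate $m_d(y)\ge m_d(x)/(2n^{d+1})$.

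That estimate is the crux, as you say, and it is not proven in your proposal---it is exactly the content of the paper's multi-page case analysis and it is where all the difficulty lives. The paper proves $m_d(Dq_k)\ge \frac{1}{2n^{d+1}}m_d(q_k)$ by splitting on whether the set of $d$ roots witnessing $m_d(q_k)$ contains all roots of one sign, and within that, on whether the innermost root on one side is at least a $1/2n$ fraction of the innermost root on the other; each branch invokes the quantitative interlacing Lemma \ref{lem:aspect} in a different way. Merely asserting ``plausibly via a case analysis on the sign pattern and magnitudes'' leaves this entirely open; there is a genuine gap here, since without some such structure theorem, interlacing does permit the top-$d$ magnitudes to shrink arbitrarily.

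Two smaller issues. First, the parenthetical ``equivalently $\sum_j 1/y_j=0$ when $e_d(y)\ne0$'' is wrong for the roots $y$ of $p'$: for a polynomial of degree $m$ with roots $y$, $\sum_j 1/y_j = e_{m-1}(y)/e_m(y)$, so $e_{d-1}(y)=0$ is equivalent to $\sum 1/y_j=0$ only when $d=m$, i.e.\ only for the roots of $p^{(n-d)}$, not for $p'$ in general. Second, your intuition that the inherited constraint $e_{d-1}(y)=0$ must be used at \emph{every} inductive step to control $m_d$ is not how the paper's proof goes: the paper uses the constraint only once, at the final differentiation (from $q_{d+1}$ to $q_d$), where it forces $q_d'(0)=0$ and hence comparable innermost roots; at all earlier steps the $m_d$ comparison is obtained from interlacing alone, via the case analysis. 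So if you try to push the constraint through at every level you will be working against the grain of what the estimate actually requires.
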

\begin{proof}

Let $p(t):=\prod_{i=1}^n(t-x_i)$ and let $q_k(t):=D^{n-k}p$, noting that $q_k$
is real-rooted of degree exactly $k$. Assume for the moment that all of the
$x_i$ are distinct and that $q_k(0)\neq 0$ for all $k=n,\ldots,d$ (note
that this is equivalent to assuming that the last $d+1$ coefficients of
$p$ are nonzero). Note that these conditions imply that all of the polynomials $q_k$ have distinct
roots, since differentiation cannot increase the multiplicity of a root.\\

If $n=d$ then the claim is trivially true since 
	\begin{equation}\label{eqn:base} e_d(x)=e_d(q_d)=m_d(q_d)=m_d(x).\end{equation}
Observe that $e_d$ behaves predictably under differentiation:
	\begin{equation}\label{eqn:edd} e_d(q_d)=e_d(D^{n-d}p)=\frac{(n-d)!}{n\cdot\ldots\cdot (d+1)}e_d(p)=\binom{n}{d}^{-1}e_d(q_n).\end{equation}
We will show by induction that:
	$$ m_d(q_{d})\ge \frac{1}{2n^{d+1}}m_d(q_{d+1})\ge\ldots \ge \frac{1}{(2n^{d+1})^{k-d}}m_d(q_k)\ge\ldots\ge \frac{1}{(2n^{d+1})^{n-d}}m_d(q_{n}),$$
	which combined with \eqref{eqn:base} and \eqref{eqn:edd} yields the desired conclusion.\\

\noindent {\bf Case $\mathbf{k=d+1}.$}  Let $z_-$ and $z_+$ be the smallest
	(in magnitude) negative and positive roots of $q_d=Dq_{d+1}$, respectively. 
	Let $w\neq 0$ be the unique root of $q_{d+1}$ between $z_-$ and $z_+$; assume without loss of generality that $w>0$ (otherwise 
	consider the polynomial $p(-x)$). Let $x_-< z_-$ and $x_+< z_+$ be the smallest
	in magnitude negative and positive roots of $q_{d+1}$ other than $w$, so that $x_- < 0<  w< x_+$. There
	are two subcases, depending on whether or not $w$ is close to zero --- if it is, then it prevents any root from
	shrinking too much under differentiation, and if it is not, the hypothesis $e_{d-1}(p)=0$ shows that $|z_-|$ and $|z_+|$ are
	comparable, which also yields the conclusion.
	\begin{itemize}
		\item Subcase $|w|\le |x_-|/2n$. By Lemma \ref{lem:aspect}, we have
			$$|z_-|\ge |x_-| - (|x_-|+|w|)(1-1/n) \ge |x_-|(1-(1+1/2n)(1-1/n))\ge |x_-|/2n.$$
			For every root of $q_{d+1}$ other than $x_-$ there is another root of $q_{d+1}$ between it and zero,
			so Lemma \ref{lem:aspect} implies that for every such root the neighboring (towards zero) root of $Dq_{d+1}$
			is smaller by at most $1/n$. Thus, we conclude that
			$$m_d(q_d)=m_d(Dq_{d+1})\ge \frac{m_d(q_{d+1})}{2n^{d}}.$$
		\item Subcase $|w|>|x_-|/2n$. In this case we may assume that $m_d(q_{d+1})$ is witnessed by the $d$ roots of $q_{d+1}$
			excluding $x_-$, call this set $W$, losing a factor of at most $1/2n$. Observe that every root in $W\setminus \{w\}$ 
			is separated from zero by another root of $q_{d+1}$, so such roots shrink by at most $1/n$ under differentiation
			by Lemma \ref{lem:aspect}. Noting that $x_-\notin W$, we have by interlacing that:
			$$ \prod_{q_d(z)=0, z\neq z_-}|z| \ge \frac{1}{n^{d-1}}\prod_{x\in W\setminus \{w\}} |x|,$$
			and our task is reduced to showing $|z_-|$ is not small compared to $|w|$

			The hypothesis $e_{d-1}(q_{d+1})=0$ implies that
		$q_d'(0)=0$; applying Lemma \ref{lem:aspect}, we find that the magnitudes of the innermost roots of $q_d$ must be comparable:
		$$|z_-|\lor |z_+|\le d\cdot (|z_-|\land|z_+|).$$ We now have
			$$|z_-|>|z_+|/n>|w|/n,$$
			so we conclude that
			$$m_d(q_d)\ge \frac{m_d(q_{d+1})}{2n^d},$$
			as desired.
	\end{itemize}

\noindent {\bf Case $\mathbf{k\ge d+2}$.} We proceed by induction.
Assume $m_d(q_k)$ is witnessed by a set of $d$ roots $L\cup R$, where $L$ contains negative roots
and $R$ contains positive ones. If there is a negative root not in $L$ and a positive root not in $R$
then as before every root in $L\cup R$ is separated from zero by another root of $q_k$, and by Lemma 7 we have
\begin{equation}\label{eqn:safe} m_d(Dq_k)\ge \frac{1}{n^d}m_d(q_k),\end{equation}
so we are are done. So assume all of the negative roots are contained in $L$; since $|L\cup R|=d$ this implies
that there are at least two positive roots not in $R$; let $z_*$ be the largest positive root not contained in $R$.
Let $z_-$ and $z_+$ be the negative and positive roots of $q_k$ of least magnitude. 
There are two cases:
\begin{itemize}
\item $|z_+|>|z_-|/2n$. This means that we can delete $z_-$ from $L$ and add $z_*$ to $R$, and reduce to the previous situation,
incurring a loss of at most $1/2n$, which means by \eqref{eqn:safe}:
$$ m_d(q_{k-1})\ge \frac{1}{2n}\frac{1}{n^d}m_d(q_k).$$
\item $|z_+|\le |z_-|/2n$. By  Lemma 7, the smallest in magnitude negative root of $Dq_k$ has magnitude at least
$$ (1-(1-1/n)(1+1/2n))|z_-|\ge |z_-|/2n,$$
and all the positive roots decrease by at most $1/n$ upon differentiating by Lemma 7, whence we have
$$m_d(Dq_k)\ge \frac{1}{2n^d}m_d(q_k).$$
\end{itemize}

To finish the proof, the requirements that $q_k(0)\neq 0$ for all $k$ and that
all coordinates of $x$ are distinct may be removed by a density argument, since
the set of $x$ for which this is true is dense in the set of $x\in\R^n$
satisfying $e_{d-1}(x)=0$.

\end{proof}
\begin{remark} We suspect that the dependence on $n$ and $d$ in the above lemma can be improved,
	and it is even plausible that it holds with a polynomial rather than exponential dependence of $R$ on $n$. 
	Since we do not know how to do this at the moment, we have chosen to present the simplest proof we know,
	without trying to optimize the parameters.\end{remark}

The following lemma is a quantitative version of the fact that the roots of the derivative of a polynomial
interlace its roots.
\begin{lemma}[Quantitative Interlacing] \label{lem:aspect} If $p$ is real rooted of degree $n$ then
		every root of $p'$ between two distinct consecutive roots of
		$p$ divides the line segment between them in at most the ratio
		$1:n$. \end{lemma}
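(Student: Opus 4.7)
My plan is to derive the bound from the logarithmic derivative identity. Write $p(x)=\prod_{j=1}^n(x-x_j)$ with roots listed with multiplicity, let $x_-<x_+$ be the two consecutive distinct roots in question, and let $y$ be a root of $p'$ with $x_-<y<x_+$. Since $p(y)\ne 0$, dividing $p'(y)=0$ by $p(y)$ yields
$$\sum_{j=1}^n\frac{1}{y-x_j}=0.$$

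The main step is to peel off the two ``dominant'' terms. Setting $a:=y-x_-$ and $b:=x_+-y$, and letting $m_-,m_+$ denote the multiplicities of $x_-,x_+$ while $L,R$ count the roots strictly below $x_-$ and strictly above $x_+$ respectively (so that $m_-+m_++L+R=n$), I would rewrite the identity as
$$\frac{m_-}{a}-\frac{m_+}{b}=-\sum_{x_j<x_-}\frac{1}{y-x_j}-\sum_{x_j>x_+}\frac{1}{y-x_j}.$$
Every term in the first sum on the right lies in $(0,1/a)$ because $y-x_j>y-x_->0$ for $x_j<x_-$, and every term in the second lies in $(-1/b,0)$ by the analogous bound on the other side. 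Hence the right-hand side lies in the open interval $(-L/a,\,R/b)$.

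Assume without loss of generality $a\le b$. Chasing the upper bound $R/b$ gives $m_-/a<(m_++R)/b$, i.e.\ $b\cdot m_-<a\cdot(m_++R)$. Using $m_-\ge 1$ together with $m_++R\le n-m_--L\le n-1$, this forces $b/a<n-1$; the case $a\ge b$ is symmetric. Therefore the two pieces into which $y$ divides $[x_-,x_+]$ differ by a factor strictly less than $n$, which is the stated $1{:}n$ ratio (with a small amount to spare).

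I do not anticipate any real obstacle: the whole argument is a one-line consequence of the partial-fractions identity, and the only mild subtlety -- the possibility that $x_-$ or $x_+$ is a multiple root -- is handled automatically because each root contributes to the identity weighted by its multiplicity, which is exactly how $m_\pm$ enter the estimate.
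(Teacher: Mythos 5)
Your proof is correct and rests on the same key idea as the paper's: the partial-fractions identity $p'(y)/p(y)=\sum_j 1/(y-x_j)=0$ together with elementary term-by-term bounds. The only difference is cosmetic --- you carry the multiplicities $m_\pm$ through directly, while the paper first assumes simple roots and then appeals to a brief limiting argument to reinstate common roots of $p$ and $p'$.
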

  \begin{proof} Begin by recalling that 
		if $p$ has distinct roots $z_1< \ldots < z_n$ then the
		roots $z_1' < \ldots < z_{n-1}'$ of $p'$ satisfy
		for $j=1,\ldots,n-1$:
		$$ \sum_{i\le j}\frac1{z_j'-z_i} = \sum_{i>j}\frac1{z_i-z_j'}.$$
		Note that the solution $z_j'$ is monotone increasing in the
		$z_i$ on the LHS and monotone decreasing in the $z_i$ on the
		RHS. Thus, $z_j'$ is at least the solution to:
		$$ \frac1{z_j'-z_j}=\frac{n}{z_{j+1}-z_j'},$$
		which means that it is at least $z_j+\frac{z_{j+1}-z_j}{n}$. A
		similar argument shows that it it at most
		$z_j+(1-1/n)(z_{j+1}-z_j)$. Adding the common roots of $p$ and
		$p'$ back in, we conclude that these inequalities are satisfied
		by all of the $z_j'$.\end{proof}

\newcommand{\rdist}{\mathsf{rdist}}
\newcommand{\lmax}{\lambda_{max}}
\section{Separation in Restriction Distance}
For a parameter $\epsilon>0$ to be chosen later and $s\in\{0,1\}^{\M_d}$ let
$$p_s(x_1,\ldots,x_n):=e_d(x_1,\ldots,x_n)-\epsilon\sum_{M\in\M_d} s_M q_M(x_1,\ldots,x_n).$$
For any polynomial $p$ hyperbolic with respect to $\one$, define the {\em restriction embedding}
$$ \Lambda(p):=(\lmax(p(t\one+\one_S)))_{S\in\S_d}.$$

\begin{lemma} \label{lem:restrictfar} If $0<\epsilon<R/N$ and $s,s'\in\{0,1\}^{\M_d}$ are distinct then both $p_s$ and $p_s'$ are hyperbolic
with respect to $\one$ and
$$\|\Lambda(p_s)-\Lambda(p_{s'})\|_\infty\ge \Delta:=\epsilon \left(\binom{n}{d} d e\right)^{-1}.$$
\end{lemma}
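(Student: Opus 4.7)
The plan is to reduce $\|\Lambda(p_s) - \Lambda(p_{s'})\|_\infty$ to a univariate sensitivity bound via the combinatorial structure of the basis $\{q_M : M \in \M_d\}$ with respect to evaluations at $\{\one_S : S \in \S_d\}$. Hyperbolicity of $p_s$ and $p_{s'}$ is immediate from Theorem \ref{thm:perturb}, since $\|\epsilon\sum_M s_M q_M\|_1 \le \epsilon N < R$ by the hypothesis $\epsilon < R/N$.

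For the separation, I would first observe that each $q_M$ is translation-invariant in the $\one$ direction, and by Lemma \ref{lem:manymatchings}, $q_M(\one_S) \in \{0,\pm 1\}$ is nonzero iff $M$ fully crosses $S$, which happens for a unique $M(S)$ when $S \in \S_d$. Thus
$$p_s(t\one + \one_S) = f_S(t) + c_S^s, \qquad f_S(t) := e_d(t\one + \one_S),\ c_S^s := -\epsilon s_{M(S)} q_{M(S)}(\one_S) \in \{0,\pm\epsilon\},$$
so $p_s$ and $p_{s'}$ restricted to the ray $\one_S + t\one$ differ only by an additive constant. Since $s \neq s'$ and (again by Lemma \ref{lem:manymatchings}) every $M \in \M_d$ fully crosses some $S \in \S_d$, I can pick $S_0 \in \S_d$ with $|c_{S_0}^s - c_{S_0}^{s'}| = \epsilon$. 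Letting $t_s, t_{s'}$ be the corresponding largest roots (well-defined and real by the hyperbolicity above), the mean value theorem gives $\epsilon = |f_{S_0}(t_s) - f_{S_0}(t_{s'})| \le |t_s - t_{s'}|\cdot \sup_\tau |f_{S_0}'(\tau)|$ over $\tau$ in the interval spanned by $t_s, t_{s'}$, so it suffices to bound $|f_{S_0}'|$ by $ed\binom{n}{d}$ on this interval.

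For the derivative bound, I would use $f_S'(t) = (n-d+1)e_{d-1}(t\one + \one_S)$ (from $\sum_i \partial_i e_d = (n-d+1)e_{d-1}$) together with the trivial estimate $|e_{d-1}(y)| \le \binom{n}{d-1}\|y\|_\infty^{d-1}$, yielding $|f_S'(t)| \le d\binom{n}{d}(1+|t|)^{d-1}$. This is at most $e\cdot d\binom{n}{d}$ as long as $|t|\le 1/(d-1)$. The unperturbed largest root $t_0 := \lmax(f_S)$ satisfies $|t_0| \le \binom{n}{d}^{-1/d}$ by Vieta's formulas: since $f_S$ has all non-negative coefficients its roots are all non-positive, $\prod_i |r_i| = f_S(0)/\binom{n}{d} = 1/\binom{n}{d}$, and $|t_0|$ is the smallest factor. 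In the regime of interest this is far smaller than $1/(d-1)$, and the astronomical smallness of $\epsilon < R/N$ coming from Theorem \ref{thm:perturb} guarantees that $t_s$ and $t_{s'}$ remain in the target interval, so one concludes $|t_s - t_{s'}| \ge \epsilon/(ed\binom{n}{d}) = \Delta$. The most delicate step is this last containment argument, since when the largest root of $f_S$ is nearly multiple the perturbed root can move on the order of $\sqrt{\epsilon}$; the slack of a factor of $e$ in the derivative bound, together with the tininess of $\epsilon$, is what makes this go through.
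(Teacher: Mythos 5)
Your approach is essentially the paper's: reduce to the univariate polynomial $J(t)=e_d(t\one+\one_S)$, observe that switching $s_{M(S)}$ shifts $J$ by $\pm\epsilon$, and then show the largest root moves by at least $\Delta$. The paper uses convexity of $J$ above its largest root $z_d$ and bounds $J'$ directly from the factorization $J(t)=\binom{n}{d}\prod_i(t-z_i)$ with $|z_d-z_i|\le1$; your mean-value-theorem phrasing plus the identity $\sum_i\partial_ie_d=(n-d+1)e_{d-1}$ gives the same bound $|J'(t)|\le d\binom{n}{d}(1+|t|)^{d-1}$, so these are the same estimate in different clothing.

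The one place you are vague is exactly where you flag uncertainty, and your diagnosis of the difficulty there is off. In the sign case the paper considers (perturbing to $J-\epsilon$, i.e.\ $q_{M}(\one_S)=+1$), there is no ``$\sqrt\epsilon$ root movement'' issue at all: $J$ is strictly increasing on $(z_d,\infty)$ and $J(0)=e_d(\one_S)=1>\epsilon$, so $\lmax(J-\epsilon)$ lies in $(z_d,0)$ and hence has magnitude at most $|z_d|\le\binom{n}{d}^{-1/d}$. The containment in your ``target interval'' is therefore automatic; no appeal to the smallness of $\epsilon$ beyond $\epsilon<1$ is needed. On the other hand, in the opposite sign case $J+\epsilon$ (which arises if $q_M(\one_S)=-1$, a possibility the paper glosses over by asserting $q_M(\one_S)=1$), the perturbed root lies in $(z_{d-1},z_d)$ and may be far from $0$; there your crude bound $|J'(t)|\le d\binom{n}{d}(1+|t|)^{d-1}$ does not give the factor $e$, since $|t|$ is only controlled by $|z_{d-1}|$, not by Vieta for the innermost root. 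To close that case you should instead bound $J'$ from the factored form: for $t\in[z_{d-1},z_d]$ every factor $|t-z_j|\le1$, so $|J'(t)|\le d\binom{n}{d}$ outright. With either fix (adopt a sign convention making $q_M(\one_S)=+1$, or use the factored derivative bound on $[z_{d-1},z_d]$), your argument is complete and matches the paper's.
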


\begin{proof} Since $\epsilon<R/N$, we have $\|e_d-p_s\|_1,\|e_d-p_{s'}\|_1 \le R$ so by Theorem \ref{thm:perturb} both of them must be hyperbolic with respect to $\one$.

Since $s\neq s'$, suppose $s_M=0$ and $s'_M=1$ for some matching $M\in\M_d$, and let $S\in \S_d$ be a set which fully crosses $M$.  By Lemma \ref{lem:manymatchings}, $M$ is the {\em only} matching in $\M_d$ which crosses $S$, so we have $q_M(\one_S)=1$ and $q_{M'}(\one_S)=0$ for all other $M'\neq M\in\M_d$. Thus, along the restriction $t\one +\one_S$, one has
\begin{equation}\label{eqn:jacobi} q_s(t\one+\one_S) = e_d(t\one + \one_S)=:J(t)\end{equation}
and
$$q_{s'}(t\one+\one_S) = e_d(t\one + \one_S)-\epsilon q_M(\one_S) = e_d(t\one+\one_S)-\epsilon=J(t)-\epsilon,$$
where we have again used that the $q_M$ are translation invariant. Note that $J(t)$ has positive leading coefficient, so subtracting a constant from it increases its largest root. Thus, our task is reduced to showing that
$$\lambda_{max}(J(t)-\epsilon)\ge \lambda_{max}(J(t))+\Delta.$$

To analyze the behavior of this perturbation, first we note that 
$$J(t)= e_d(t1, \ldots, t1, t1 + 1, \ldots, t1 + 1) = \frac{1}{(n-d)!} D^{n-d} e_N(t \one + \one_S) = \frac{1}{(n-d)!} D^{n-d}t^{n-d} (t+1)^d.$$
Since $t^{n-d}(t+1)^d$ has roots in $[-1,0]$ and the roots of the derivative of a polynomial interlace its roots, 
we immediately conclude that the zeros of $J$ satisfy $-1\le z_1\le \ldots \le z_d\le 0$. 
Again by interlacing, we see that $J'(z)\ge 0$ and $J''(z)\ge 0$ for $z\ge z_d$, whence $J$ is monotone and convex above $z_d$. Thus, $\lambda:=\lambda_{max}(J(t)-\epsilon)$ is the least $\lambda>z_d$ such that $J(\lambda)\ge \epsilon$. Let $\theta>0$ be a parameter to be set later. Then either $\lambda\ge z_d+\theta$ or we have by convexity that
$$J(z_d+\theta)\le J(z_d)+\theta J'(z_d+\theta).$$
The first term is zero and we can upperbound the second term as:
$$|J'(z_d+\theta)| \le \binom{n}{d} \sum_{i\le d}|\prod_{j\neq i}(z_d+\theta-z_i)|\le \binom{n}{d} d\cdot (1+\theta)^{n-1},$$
since $|z_d-z_i|\le 1$ for every $i\le d$. Thus, we have
$$ J(z_d+\theta)\le \binom{n}{d}d\theta(1+\theta)^{n-1}\le \binom{n}{d} d\theta e$$
whenever $\theta<1/n$, which is less than $\epsilon$ for $\theta=\epsilon (\binom{n}{d} d e)^{-1}$. This means that in either case we must have
$$\lambda\ge z_d+	\epsilon \left(\binom{n}{d} d e\right)^{-1},$$
as desired.
\end{proof}

\section{Separation in Hausdorff Distance} \label{sec:conesfar}
\begin{lemma} \label{lem:restrictiontocone} For $s, s'\in\{0,1\}^{\M_d}$ and 
$$\epsilon<\frac{1}{4n^{d(n-d)}Nd\sqrt{n}}=:R_2$$
we have
 $$\hdist(K_{p_s},K_{p_{s'}})>\frac{\|\Lambda(p_s)-\Lambda(p_s')\|_\infty}{18n^{d(n-d)}Nn}$$
\end{lemma}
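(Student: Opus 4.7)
The plan is to convert the pointwise root gap $\delta := \|\Lambda(p_s)-\Lambda(p_{s'})\|_\infty$ into a Hausdorff separation by exhibiting an explicit boundary point and then invoking a Lipschitz estimate for the maximum-root function. Pick $S\in\S_d$ achieving the max in the definition of $\Lambda$, write $\lambda := \lmax(p_s(t\one+\one_S))$ and $\lambda' := \lmax(p_{s'}(t\one+\one_S))$, and assume without loss of generality $\lambda-\lambda' = \delta > 0$. Let $y := \one_S + \lambda'\one$; using that each $q_M$ is translation-invariant and the identity $p(t\one+x+c\one) = p((t+c)\one+x)$, one obtains $\lmax(p_{s'}(t\one+y)) = 0$, so $y\in\partial K_{p_{s'}}$, and $\lmax(p_s(t\one+y)) = \delta$, so $y\notin K_{p_s}$. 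Since $\epsilon<R_2$ is tiny and the Jacobi-type polynomial $J$ of Lemma~\ref{lem:restrictfar} has roots in $[-1,0]$, a routine perturbation estimate gives $|\lambda'|\le 2$ and hence $\|y\|\le 2\sqrt n$. Normalising to $\hat y := y/\|y\|\in\mathcal{B}\cap K_{p_{s'}}$ and using the cone-scaling identity $d(cy,K)=c\cdot d(y,K)$ yields
$$ \hdist(K_{p_s},K_{p_{s'}})\ge d(\hat y, K_{p_s}) = \frac{d(y,K_{p_s})}{\|y\|}.$$

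It therefore suffices to prove $d(y,K_{p_s})\ge \delta/(9 n^{d(n-d)}N\sqrt n)$. Let $\Phi_s(x) := \lmax(p_s(t\one+x))$, so $K_{p_s} = \{\Phi_s \le 0\}$. If $\Phi_s$ is $L$-Lipschitz in a Euclidean ball around $y$, every $z\in K_{p_s}$ obeys $0\ge \Phi_s(z)\ge\Phi_s(y) - L\|z-y\| = \delta - L\|z-y\|$, forcing $\|z-y\|\ge \delta/L$, i.e. $d(y,K_{p_s})\ge\delta/L$. Thus the problem reduces to obtaining a Lipschitz bound $L\le 9n^{d(n-d)} N\sqrt n$ for $\Phi_s$ in a neighbourhood of $y$.

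To bound $L$, write $P(t) := p_s(t\one+y)$, which (generically) has simple top root $\delta$. For a displacement $y\mapsto y+u$, standard first-order root perturbation theory (Rouché or the implicit function theorem) gives a shift in the top root of at most $\|u\|\cdot d\cdot \|p_s\|_1/|P'(\delta)|$, where the factor $d\cdot\|p_s\|_1$ bounds the change in the coefficients of the univariate restriction. Expanding in the basis $\{q_M\}$, one has $\|p_s\|_1 \le \binom{n}{d}+\epsilon\cdot 2^d N = O(N)$, which supplies the $N$ factor in the final denominator. The main technical obstacle, and the source of the $n^{d(n-d)}$ factor, is a lower bound on $|P'(\delta)|$: the polynomial $p_s(t\one+\one_S) = J(t)-\epsilon q_M(\one_S)$ is a tiny perturbation of the Jacobi polynomial $J$ whose roots may cluster near $0$, and $|P'(\delta)|$ is a product of such possibly-small root gaps. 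Here I would adapt the quantitative interlacing of Lemma~\ref{lem:aspect} together with the root-separation argument driving Lemma~\ref{lem:edmd} (which is precisely where an $n^{d(n-d)}$-type blowup appears) to obtain $|P'(\delta)|\ge n^{-d(n-d)}$. Combining the Lipschitz bound $L = O(n^{d(n-d)}N\sqrt n)$ with $\|y\|\le 2\sqrt n$ from the first paragraph and absorbing the numerical factors into the constant $18$ yields the announced inequality.
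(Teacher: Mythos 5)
Your plan reaches the right boundary point, normalizes it the right way, and correctly identifies that the whole estimate reduces to a lower bound of the form $J'(\lambda)\ge n^{-d(n-d)}$ at the top root of the Jacobi-type restriction --- this is exactly the content of Lemma~\ref{lem:jacobi}, which the paper states and proves in the appendix, and which you would want to invoke directly rather than re-derive from Lemmas~\ref{lem:aspect} and~\ref{lem:edmd}. However, the way you convert this into a distance bound has a genuine gap. You reduce to ``$\Phi_s$ is $L$-Lipschitz in a ball around $y$'' and then bound $L$ by ``standard first-order root perturbation theory.'' First-order perturbation theory gives an asymptotic estimate $|\Delta\lambda| \approx |Q(\delta)|/|P'(\delta)| + O(\|u\|^2)$; it does not by itself give a Lipschitz constant on a ball of radius comparable to $\delta/L$, because the denominator $P'$ can degrade as you move away from $y$. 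To make that step rigorous you would need either a uniform lower bound on $P'$ over the relevant neighbourhood (which is not established, and is not obviously true near the clustered roots of $J$), or to invoke the convexity of the max-root function $\Phi_s$, which turns a pointwise subgradient bound into a global one and obviates the neighbourhood argument entirely. You use neither.

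The paper sidesteps this issue cleanly by working with the convexity of the \emph{cone} $K_{p_s}$ itself rather than a Lipschitz bound on $\Phi_s$: it exhibits $z\in\partial K_{p_s}$ and $z' = z+\Delta\one\in\partial K_{p_{s'}}$ on the same affine line $t\one+\one_S$, takes the supporting hyperplane of $K_{p_s}$ at $z$ with normal $v=\nabla p_s(z)$, and observes that by convexity $d(z',K_{p_s})\ge d(z',H) = \Delta\,|\langle\one,v\rangle|/\|v\|$. The inner product $\langle\one,\nabla p_s(z)\rangle$ is precisely the directional derivative $J'(\lambda)$ bounded by Lemma~\ref{lem:jacobi}, and $\|v\|,\|z'\|$ are bounded by elementary estimates just as you sketch. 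This is a strictly pointwise computation requiring no control over a neighbourhood, and it makes the convexity that your Lipschitz argument implicitly relies on explicit and load-bearing. If you replace your Lipschitz step with either the convexity-of-$\Phi_s$ subgradient inequality $0\ge\Phi_s(z)\ge\Phi_s(y)+\langle g,z-y\rangle$ (which gives $d(y,K_{p_s})\ge\delta/\|g\|$ immediately) or the paper's tangent-hyperplane computation, the rest of your outline --- the choice of witness point, the normalization to the unit ball, the crude norm bounds, and the Jacobi derivative estimate --- goes through.
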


\begin{proof}
Suppose $\|\Lambda(p_s)-\Lambda(p_s')\|_\infty=\Delta>0$. There must be some restriction $t \one + \one_S$ along which the intersections with the boundaries of the cones differ by $\Delta$. Moreover, this $S$ must be fully crossed by some matching $M$ for which $s_M$ or $s_{M'}$ is nonzero, since otherwise we would have $p_s(t\one + \one_S)=p_{s'}(t\one +\one_S)=e_d(t\one+\one_S)$. By Lemma \ref{lem:manymatchings}, there is a unique such $M$ and we may assume that $s_M=0$ and $s_M'=1$, and moreover $q_{M'}(t\one+\one_S)=0$ for all other $M'\in\M_d$, so that we have
$$p_s(\lambda\one+\one_S)=e_d(\lambda\one+\one_S)=0$$
and
$$p_{s'}((\lambda+\Delta)\one+\one_S) = e_d((\lambda+\Delta)\one+\one_S)-\epsilon=0$$
where $\lambda\in\R$ is the largest root of the polynomial $J(t)$ from \eqref{eqn:jacobi}.

Let 
$$z=\lambda\one+\one_S$$
and 
$$z'=(\lambda+\Delta)\one+\one_S=z+\Delta\one$$ denote the corresponding points in $\R^n$ on the boundaries of $K_{p_s}$ and $K_{p_{s'}}$, respectively.
Let $H$ be the hyperplane tangent to $K_{p_s}$ at $z$. Let $v$ be a unit vector normal to $H$. Since the hyperbolicity cones are convex, the distance from $z'$ to $K_{p_s}$ is at least the distance to $H$, which is given by:
\[ \|\Delta \one\|\cdot \frac{|\langle \one, v \rangle |}{\| \one \| \| v\|}=\Delta\cdot\frac{|\langle\one,v\rangle|}{\|v\|}. \]
Normalizing so that $z'$ is a unit vector, we obtain:
\begin{equation} \label{eqn:zwv} \hdist(K_{p_{s}},K_{p_{s'}}) \geq \Delta \frac{|\langle \one, v \rangle|}{\| v\| \|z'\|},\end{equation}
so if we can prove a uniform lower bound on this quantity over all $v$ and $z'$ corresponding to $s,s'$, then we are done.

Computing the normal we find that 
$$v = \nabla p_s (z) = \nabla e_d(z) + \epsilon \sum_{M\in\M_d} s_M \nabla q_M(z)=:\nabla e_d(z)+\epsilon w,$$
so that
$$ |\langle \one, v\rangle| \ge |\langle \one,\nabla e_d(z)\rangle|-\epsilon\|\one\|\|w\|.$$
The first inner product is just the directional derivative of $e_d(t\one+\one_S)$ along $\one$ at $t=\lambda$, which is:
\begin{equation} \label{eqn:jaclb}
\frac{d}{dt}e_d(t\one+\one_S)|_{t=\lambda}=J'(\lambda)\ge \frac{1}{n^{d(n-d)}},\end{equation}
by Lemma \ref{lem:jacobi}, proven below.

We now prove crude upper bounds on $\|z'\|,\|w\|, \|v\|$, which will be negligible when $\epsilon$ is small. First we have
\begin{equation}\label{eqn:zbound}\|z'\|\le (\lambda+\Delta)\|\one\|+\|\one_S\|\le 3\sqrt{n},\end{equation}
since $|\lambda|\le 1$ because $J$ has roots in $[-1,0]$ and $|\Delta|<1.$

To control $\|w\|$, we compute the $i$th coordinate of $\nabla q_M(x)$:
$$\partial_{x_i} q_M(x)=\sigma_i\prod_{k<l\in M\setminus\{i\}} (x_k-x_l),$$
where $\sigma_i$ zero if $i\notin M$ and $\pm 1$ if $i\in M$. Since $z$ has coordinates in $[-1,0]$, $|\partial_{x_i}q_M(z)|\le 1$ for all $i\in M$ and 
$$\|\nabla q_M(z)\|\le \sqrt{2d}.$$
Applying the triangle inequality and noting that $s_M\in \{0,1\}$ gives
\begin{equation}\label{eqn:wbound} \|w\|\le |\M_d|\cdot \sqrt{2d}\le 2Nd.\end{equation}

Finally, we have
$$\|\nabla e_d(z)\|=\sqrt{\sum_{i=1}^ne_{d-1}(z_{-i})^2},$$
where $z_{-i}$ is the vector obtained by deleting the $i^{th}$ coordinate of $z$. Since these coordinates are bounded in magnitude by $1$, the above norm is bounded by
$$ \sqrt{n}\cdot\binom{n}{d-1}\le \sqrt{n}\cdot N,$$
and applying the triangle inequality once more we get
\begin{equation} \label{eqn:vbound}\|v\|\le \sqrt{n}\cdot N + \epsilon\|w\|\le 3N\sqrt{n},\end{equation}
since $\epsilon<1$.

Combining \eqref{eqn:zwv}, \eqref{eqn:jaclb}, \eqref{eqn:zbound},\eqref{eqn:wbound}, and \eqref{eqn:vbound}, we have:
$$\hdist(K_{p_s},K_{p_{s'}})\ge \Delta\cdot \frac{n^{-d(n-d)}-\epsilon\cdot 2Nd\sqrt{n}}{3\sqrt{n}\cdot 3N\sqrt{n}}\ge \frac{\Delta}{18n^{d(n-d)}Nn},$$
provided
$$2Nd\sqrt{n}\epsilon \le (1/2)n^{-d(n-d)},$$
as desired.
\end{proof}

\begin{lemma}[Sensitivity of Jacobi Root]\label{lem:jacobi} Let $\lambda$ be the largest root of $J(t)$. Then
$$J'(\lambda)\ge \frac{1}{n^{d(n-d)}}$$.
\end{lemma}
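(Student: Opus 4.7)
The plan is to factor $J$ and bound each gap between $\lambda$ and the other roots of $J$ by tracking how rapidly roots can coalesce under differentiation. Write $J(t)=\binom{n}{d}\prod_{i=1}^d(t-z_i)$ with $z_1\le\ldots\le z_d=\lambda$; the leading coefficient is $\binom{n}{d}$ because $D^{n-d}t^n=\tfrac{n!}{d!}t^d$, and all $z_i$ lie in $[-1,0]$ by iterated Rolle applied to $p(t):=t^{n-d}(t+1)^d$. Then $J'(\lambda)=\binom{n}{d}\prod_{i<d}(\lambda-z_i)$, so it suffices to lower bound every factor.

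To this end I would consider the chain $q_k:=D^{n-k}p$ for $k=n,n-1,\ldots,d$, so that $q_n=p$ and $q_d=(n-d)!\,J$. Let $G_k$ denote the minimum spacing between consecutive distinct roots of $q_k$. Since $q_n$ has only the two distinct roots $-1$ and $0$, we have $G_n=1$, and the goal is to prove by induction that $G_{k-1}\ge G_k/(n+1)$, from which $G_d\ge(n+1)^{-(n-d)}$ follows by telescoping.

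The inductive step is a mild extension of Lemma \ref{lem:aspect} to polynomials with multiple roots. If $a_1<\cdots<a_s$ are the distinct roots of $q_k$ with multiplicities $m_j$, then $q_{k-1}$ has the same multi-root sites with multiplicities $m_j-1$ (possibly zero) plus one new simple root $b_j\in(a_j,a_{j+1})$ in each gap. The logarithmic derivative identity $q_k'(t)/q_k(t)=\sum_j m_j/(t-a_j)$ evaluated at $b_j$ gives, exactly as in the proof of Lemma \ref{lem:aspect}, the bounds $b_j-a_j,\ a_{j+1}-b_j\ge(a_{j+1}-a_j)/(k+1)$. A short case analysis on two consecutive distinct roots of $q_{k-1}$ (each either an ``old'' multi-root site with $m_j\ge 2$ or a ``new'' $b_j$) shows that such a pair always straddles either a single gap of $q_k$ or a simple root of $q_k$ sandwiched between two adjacent gaps; in either case the separation is at least $G_k/(n+1)$.

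Finally, a multiplicity count shows that the only possible multi-root of $q_d$ is $-1$ (with multiplicity $\max(0,2d-n)$), so $\lambda$ is necessarily a simple root and each $\lambda-z_i$ for $i<d$ is either one gap or a sum of consecutive gaps in $q_d$, hence at least $G_d\ge(n+1)^{-(n-d)}$. Combining,
$$J'(\lambda)\ge\binom{n}{d}(n+1)^{-(n-d)(d-1)}\ge n^{-d(n-d)},$$
the last step reducing to the elementary inequality $(n+1)^{d-1}\le n^d$ valid for $n\ge 3$. The only delicate point is the generalization of Lemma \ref{lem:aspect} to the multi-root setting, needed because the differentiation chain starts at the highly degenerate $q_n=t^{n-d}(t+1)^d$; this is routine but must be organized around the case analysis above.
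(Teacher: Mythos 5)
Your proof is correct and reaches the required bound, but it follows a genuinely different route from the paper. The paper exploits the special structure of the chain $q_k = D^k t^{n-d}(t+1)^d$: since $q_k$ has a root pinned at $0$ for every $k < n-d$, one can track a \emph{single} quantity, the magnitude $|y_k|$ of the innermost negative root, and Lemma~\ref{lem:aspect} applied to the gap $(y_k,0)$ gives directly $|y_{k+1}|\ge |y_k|/n$. This yields $|y_{n-d-1}|\ge n^{-(n-d-1)}$, and then one more application of Lemma~\ref{lem:aspect} across that same pinned gap gives $z_d - z_{d-1}\ge |y_{n-d-1}|/n$, with $J'(\lambda)\ge (z_d-z_{d-1})^{d-1}$ closing the argument. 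Your proof instead tracks the \emph{global} minimum spacing $G_k$ between consecutive distinct roots, which requires (a) extending Lemma~\ref{lem:aspect} to the multiple-root setting carefully (the paper needs the same extension but only for the degenerate gap adjacent to the pinned root), (b) the case analysis on whether a consecutive pair in $q_{k-1}$ straddles one gap or a simple root between two gaps, and (c) a separate multiplicity count to verify that $\lambda$ is a simple root. What your approach buys is robustness — it does not rely on a root being pinned at the origin and gives a lower bound on \emph{every} factor $\lambda - z_i$ rather than just the nearest one; what it costs is the extra bookkeeping. Two small inaccuracies worth noting but which do not affect correctness: the Lemma~\ref{lem:aspect} constant for $q_k$ (degree $k$) should be $1/k$ rather than $1/(k+1)$, so your telescope actually delivers $G_d\ge n^{-(n-d)}$; and your inclusion of the leading coefficient $\binom{n}{d}$ in $J'(\lambda)$, which the paper silently drops, strengthens the final inequality and makes the closing estimate $(n+1)^{d-1}\le n^d$ unnecessary in principle.
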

The proof is deferred to the appendix.

\newcommand{\kernel}{\mathsf{Ker}}
\newcommand{\Id}{\mathsf{Id}}

\section{Separation of Matrix Parameterizations}

Given $C = \{C_i\}_{i \in [n]} \subset \R^{k \times k}$, define the cone 
\[ K_C = \{ x \in \R^n \mid \sum_{i \in [n]}  C_i x_i \succeq 0\}   \]
Here $C$ is said to be a spectrahedral representation of the cone $K_C$.

\begin{definition}
	A spectrahedral representation of a cone $K  \subseteq \R^n$ as $K = \{ x \in \R^n | \sum_i C_i x_i \succeq 0\}$,is said to be a {\it normalized} if,
    	\[ \sum_{i = 1}^n C_i = \Id_k \]
        and $C_i \succeq 0$
 \end{definition}
 
 \begin{lemma} \label{lem:normalization}
	If a spectrahedral cone $K$ contains the positive orthant $\R_{+}^n$ then $K$ admits a normalized representation.
 \end{lemma}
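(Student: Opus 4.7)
The plan is to begin with an arbitrary spectrahedral representation $K = \{x : \sum_i A_i x_i \succeq 0\}$ in $\R^{k \times k}$ and transform it by a congruence into a normalized one, possibly after decreasing $k$ to kill a common kernel. First I would exploit the hypothesis $\R_+^n \subseteq K$: substituting the standard basis vector $e_j$ into the defining inequality immediately yields $A_j \succeq 0$ for every $j$. Consequently $S := \sum_i A_i \succeq 0$ as well.

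In the easy case where $S$ is strictly positive definite, setting $C_i := S^{-1/2} A_i S^{-1/2}$ finishes the proof: each $C_i$ is PSD as a congruence of a PSD matrix, we have $\sum_i C_i = S^{-1/2} S S^{-1/2} = \Id_k$, and multiplication by the invertible matrix $S^{-1/2}$ on both sides preserves positive semidefiniteness, so the cone defined by $\{C_i\}$ coincides with $K$.

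The main obstacle is when $S$ is only positive semidefinite. The key observation I would establish is the containment $\kernel(S) \subseteq \kernel(A_i)$ for every $i$: if $Sv = 0$ then $0 = v^\top S v = \sum_i v^\top A_i v$, and since each summand is nonnegative and $A_i \succeq 0$, we must have $A_i v = 0$. Letting $V \in \R^{k \times k'}$ have columns forming an orthonormal basis of $\mathrm{im}(S)$, with $k' = \mathrm{rank}(S)$, the matrix $\sum_i A_i x_i$ vanishes on $\kernel(S)$ for every $x$; hence it is PSD if and only if its compression $\sum_i (V^\top A_i V) x_i$ to $\mathrm{im}(S)$ is PSD. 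Thus $\{V^\top A_i V\}_{i \in [n]}$ is a representation of the same cone $K$ in $\R^{k' \times k'}$, and now $\sum_i V^\top A_i V = V^\top S V$ is positive definite on $\R^{k'}$, reducing the problem to the invertible case already treated.
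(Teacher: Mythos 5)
Your argument is correct and follows essentially the same route as the paper's proof: use the standard basis vectors to deduce $A_i\succeq 0$, identify the common kernel (which you phrase as $\kernel(S)=\cap_i\kernel(A_i)$), compress to its orthogonal complement, and finish with the congruence by $S^{-1/2}$. The only cosmetic difference is the order of steps — you establish $A_i\succeq 0$ before reducing by the common kernel, while the paper reduces first and then observes positivity — which changes nothing substantive.
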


 \begin{proof}
Let $C = \{C_i\}_{i \in [n]}$ be a spectrahedral representation of $K$.
Let $U = \cap_{i = 1}^n \kernel(C_i)$ be the subspace in the kernel of all the $C_i$, and let $\Pi_{U^\perp}$ denote the projection on to $U^{\perp}$.  It is easy to check that for all $x \in \R^n$,
 \[ \sum_i C_i x_i \succeq 0 \iff \sum_i \Pi_{U^{\perp}} C_i \Pi_{U^{\perp}} x_i \succeq 0 .\]
 By a basis change of $\R^n$, that contains a basis for $U^{\perp}$, we obtain matrices $C' = \{C'_i\}_{i \in [n]}$ in $\R^{k' \times k'}$ where $k' = \dim(U^{\perp})$ such that $K = K_{C'}$.
 
Furthermore, since the $i^{th}$ basis vector $e_i$ is in the non-negative orthant, $e_i \in K$.  This implies that $C'_i \succeq 0$ for each $i$.  For each $u \in U^{\perp}$, there exists $C_i$ such that $u^T C_i u > 0$, which implies that 
$\sum_i u^T  C'_i u > 0$.  In other words, we have $M = \sum_i C'_i \succ 0$ is positive definite.

The normalized representation of $K$ is given by $C'' = \{ M^{-1/2}C'_i M^{-1/2}\}_{i \in [n]}$.  By definition, the representation is normalized in that $\sum_{i \in [n]} C''_i = \Id_{k'}$.
 \end{proof}
 
For two spectrahedral representations given by matrices $ C = \{C_i\}_{i \in [n]} $ and $C' = \{C'_i\}_{i \in [n]}$, define the distance between the representations as,
\[ \mdist(C, C')  = \max_i \| C_i - C'_i \| \]

\begin{lemma} \label{lem:conetomatrices}
Suppose $C, C'$ are normalized spectrahedral representations of $K_C$ and $K_{C'}$ respectively.  Then,
$\hdist(K_C, K_{C'}) \leq n^{3/2} \cdot \mdist(C,C')$
\end{lemma}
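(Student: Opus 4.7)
My plan is to show that for every $x\in B\cap K_C$ the shifted point $x':=x+t\one$ lies in $K_{C'}$ for $t$ of order $\mdist(C,C')\sqrt{n}$, and then appeal to the symmetric roles of $C$ and $C'$ to bound the other term in $\hdist(K_C,K_{C'})$. The key observation is that the normalization $\sum_i C'_i=\Id_k$ converts any spectral deficit of $\sum_i C'_i x_i$ directly into a geometric translation of $x$ along $\one$; this is the only place the normalization gets used.

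Concretely, I would write
\[
\sum_i C'_i x_i \;=\; \sum_i C_i x_i \;+\; \sum_i (C'_i - C_i)\, x_i.
\]
The first term is PSD because $x\in K_C$. For the perturbation, the operator-norm triangle inequality together with the standard bound $\|x\|_1\le \sqrt{n}\,\|x\|_2\le \sqrt{n}$ gives
\[
\Bigl\|\sum_i (C'_i - C_i)\, x_i\Bigr\| \;\le\; \mdist(C,C')\cdot \|x\|_1 \;\le\; \sqrt{n}\cdot\mdist(C,C').
\]
Hence $\sum_i C'_i x_i \succeq -\sqrt{n}\,\mdist(C,C')\cdot\Id_k$. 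Setting $t:=\sqrt{n}\,\mdist(C,C')$ and applying normalization,
\[
\sum_i C'_i\, x'_i \;=\; \sum_i C'_i x_i \;+\; t \sum_i C'_i \;\succeq\; 0,
\]
so $x'\in K_{C'}$. The displacement is $\|x'-x\|_2 = t\sqrt{n} = n\cdot \mdist(C,C')$, so $d(x,K_{C'})\le n\cdot \mdist(C,C')\le n^{3/2}\mdist(C,C')$.

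By exactly the same argument with the roles of $C$ and $C'$ exchanged, $d(y,K_C)\le n\cdot\mdist(C,C')$ for every $y\in B\cap K_{C'}$. Combining the two estimates with the definition of $\hdist$ gives the claim. I expect no real obstacle here: everything is driven by the shift-by-$\one$ trick enabled by normalization, and the factor $n^{3/2}$ is simply a convenient weakening of the sharper $n$ that this argument actually produces.
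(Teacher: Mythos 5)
Your proof is correct and follows essentially the same route as the paper: decompose $\sum_i C'_i x_i$ as $\sum_i C_i x_i$ plus a perturbation, bound the perturbation in operator norm via the triangle inequality, and exploit the normalization $\sum_i C'_i = \Id_k$ to translate $x$ along $\one$ into $K_{C'}$. Your use of $\|x\|_1\le\sqrt{n}\,\|x\|_2$ (from the $\ell_2$ ball) rather than the paper's cruder entrywise $\ell_\infty$ bound yields the sharper constant $n$, which you correctly weaken to $n^{3/2}$ to match the stated inequality.
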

\begin{figure}
	\centering
	\includegraphics[width=0.6\textwidth]{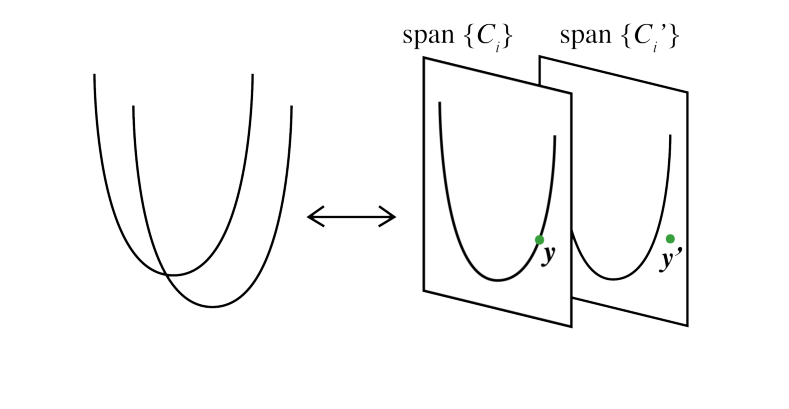}
	\caption{Proof of Lemma \ref{lem:conetomatrices}}
\end{figure}

\begin{proof}
Let $B(0,1)$ denote the $\ell_\infty$ unit ball in $\R^n$.  For every $x \in K_C \cap B(0,1)$, 
\begin{align*}
 \sum_{i \in [n]} C'_i x_i & = \sum_{i \in [n]} C_i x_i + \sum_{i \in [n]} (C_i-C'_i) x_i \\
 & \succeq - n \mdist(C,C') \cdot \Id_k 
 \end{align*}
where we are using the fact that $\sum_i C_i x_i \succeq 0$.  This implies that the point $x' =  x + n \mdist(C,C') \cdot \one \in K_{C'}$.  To see this, recall that the representation is normalized in that $\sum_i C'_i = \Id_k$.  Therefore, we get 
\[ \sum_i C'_i x'_i = \sum_i C'_i x_i + n \cdot \mdist(C,C') \sum_i C'_i \succeq 0 \]
The lemma follows by observing that $\|x - x'\|_2 \leq n^{3/2} \cdot \mdist(C,C')$.

\end{proof}

\section{Proof of The Main Theorem}
We restate the theorem for convenience.
\begin{theorem} [Restatement of Main Theorem]\label{thm:main}
There exists an absolute constant $\kappa > 0$ such that for all sufficiently large $n, d$, 
there exists an n-variate degree $d$ hyperbolic polynomial $p$ whose hyperbolicity cone $K_p$ does not admit an $\eta-$approximate spectrahedral representation of dimension $B \leq \left(n/ d\right)^{\kappa d}$, for $\eta=1/n^{4nd}$. 
\end{theorem}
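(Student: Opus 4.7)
The plan is a packing argument that assembles the preceding sections. Choose $\epsilon$ strictly below both thresholds $R/N$ from Lemma \ref{lem:restrictfar} and $R_2$ from Lemma \ref{lem:restrictiontocone}, and form the family $\{p_s:s\in\{0,1\}^{\M_d}\}$ of $2^N$ hyperbolic polynomials, where $N\ge\binom{n}{d}/(4\cdot 2^d)\ge(n/(2d))^d/4$ by Lemma \ref{lem:manymatchings}. Combining Lemmas \ref{lem:restrictfar} and \ref{lem:restrictiontocone} then gives an explicit $\epsilon_1$ with $\hdist(K_{p_s},K_{p_{s'}})\ge\epsilon_1$ for every distinct pair. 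Tracking the exponents of $n$ that appear in $R$, $R_2$, $N$, and $\Delta$ shows that $\epsilon_1\ge 1/n^{c\,nd}$ for an absolute constant $c<4$, so $\epsilon_1$ exceeds $2\eta=2/n^{4nd}$ by a polynomial factor of $n$.

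I next verify that every $K_{p_s}$ contains $\R_+^n$, so that Lemma \ref{lem:normalization} will be applicable. Since each $q_M$ is translation invariant, $p_s(t\one+x)=e_d(t\one+x)-\epsilon\,q_s(x)$ differs from $e_d(t\one+x)$ only by an additive constant. If $x\in\R_+^n$ has fewer than $d$ nonzero coordinates, a pigeonhole argument shows that every $d$-matching has at least one edge with both endpoints in the zero-coordinate set, so $q_M(x)=0$ for all $M$ and $p_s(t\one+x)=e_d(t\one+x)$ already has all non-positive roots. Otherwise $x$ has at least $d$ positive coordinates, $e_d(x)>0$, and the largest root of $e_d(t\one+x)$ is strictly negative with a gap much larger than $\epsilon\,|q_s(x)|$ for the microscopic $\epsilon$ we have chosen. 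Hence $\R_+^n\subseteq K_{p_s}$ for every $s$.

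Suppose for contradiction that every $K_{p_s}$ admits an $\eta$-approximate spectrahedral representation $K'_s$ of dimension $B\le(n/d)^{\kappa d}$. After replacing $K'_s$, if needed, by a spectrahedron of dimension $O(B)$ that contains $\R_+^n$ and still approximates $K_{p_s}$ to within $O(\eta)$ in Hausdorff distance, Lemma \ref{lem:normalization} produces normalized representations $(C^s_i)_{i\in[n]}$. The triangle inequality gives $\hdist(K'_s,K'_{s'})\ge\epsilon_1-O(\eta)\ge\epsilon_1/2$ for $s\neq s'$, and Lemma \ref{lem:conetomatrices} transfers this to
$$\max_{i\in[n]}\|C^s_i-C^{s'}_i\|_{\mathrm{op}}\ge\frac{\epsilon_1}{2\,n^{3/2}}=:\epsilon_2.$$

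Finally, a volume packing estimate closes the argument. Since $\sum_i C^s_i=\Id_B$ with $C^s_i\succeq 0$, every $C^s_i$ satisfies $0\preceq C^s_i\preceq\Id_B$ and therefore lies in a Frobenius ball of radius $\sqrt{B}$ inside the $B(B+1)/2$-dimensional space of symmetric $B\times B$ matrices. A standard volume computation bounds the number of $n$-tuples of such matrices which are pairwise $\epsilon_2$-separated in the $\max_i\|\cdot\|_{\mathrm{op}}$ metric by $(3\sqrt{B}/\epsilon_2)^{nB(B+1)/2}$. Matching this against the $2^N$ tuples we have produced gives
$$2^N\le\left(\frac{3\sqrt{B}}{\epsilon_2}\right)^{nB^2},$$
and using $N\ge(n/(2d))^d/4$ together with $\log(1/\epsilon_2)=O(nd\log n)$ rearranges to $B\ge(n/d)^{\kappa d}$ for a sufficiently small absolute constant $\kappa>0$, contradicting the assumption. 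I expect the main obstacle to be the enlargement step in the previous paragraph: replacing an arbitrary $\eta$-approximate spectrahedron by one of comparable dimension that still approximates $K_{p_s}$ but also contains $\R_+^n$, so that Lemma \ref{lem:normalization} cleanly applies. Once this is in place, the remaining bookkeeping of exponents is routine.
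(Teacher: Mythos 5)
Your proposal follows the paper's proof almost step for step: the same family $\{p_s\}_{s\in\{0,1\}^{\M_d}}$ built from Lemmas \ref{lem:manymatchings}, \ref{lem:restrictfar}, \ref{lem:restrictiontocone}, the same use of Lemmas \ref{lem:normalization} and \ref{lem:conetomatrices} to pass to normalized matrix tuples, and the same volume packing bound to force $B\geq(n/d)^{\kappa d}$, with exponents of the same order. Two small points of divergence are worth noting. First, for $\R_+^n\subseteq K_{p_s}$ the paper argues at the level of coefficients (the perturbation is small enough that all coefficients of $p_s$ remain non-negative, which forces every restriction $p_s(t\one+x)$, $x\geq 0$, to have only non-positive roots), whereas you argue restriction-by-restriction; your case of $x$ with at least $d$ positive coordinates is stated loosely (``gap much larger than $\epsilon\,|q_s(x)|$'') and should be firmed up, e.g.\ by noting $e_d(t\one+x)\geq e_d(x)$ for $t\geq 0$, $x\geq 0$ together with $|q_s(x)|\leq N\,2^d\,m_d(x)\leq N\,2^d\,e_d(x)$, so $\epsilon<R/N<1/(N2^d)$ suffices. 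The coefficient argument is cleaner and worth adopting.

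Second, the obstacle you flag at the end is real and, interestingly, the paper itself does not address it. Lemma \ref{lem:normalization} requires the spectrahedral cone to contain $\R_+^n$, but an arbitrary $\gamma/3$-approximate spectrahedron $K'$ of $K_p$ need only be Hausdorff-close to a cone containing the orthant; it need not contain $\R_+^n$, nor even the basis vectors $e_i$ (which lie on $\partial K_p$). Replacing $K'$ by $K'\cap\R_+^n$ ruins the approximation of $K_p$, and there is no generic way to enlarge a spectrahedron of dimension $B$ to one of comparable dimension containing $\R_+^n$ while staying close to $K_p$. The paper simply invokes Lemma \ref{lem:normalization} on the approximating cone without justifying its hypothesis, so you have located a genuine gap in the write-up rather than in your own argument. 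One standard repair is to weaken the conclusion to outer approximations, i.e.\ require $K_p\subseteq K'$ in the definition of $\eta$-approximate representation (natural in the context of the Generalized Lax Conjecture, where exact representations satisfy $K_p=K_q\cap K_{p}$ with $pq$ determinantal); under that convention $\R_+^n\subseteq K_p\subseteq K'$ and Lemma \ref{lem:normalization} applies directly. Whichever fix one adopts, it should be stated, and your instinct to isolate this as the crux of the argument is correct.
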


\begin{proof} Set $\epsilon$ smaller than $R$ and $R_2$.
Theorem \ref{thm:perturb}, Lemma \ref{lem:restrictfar} and Lemma \ref{lem:restrictiontocone}
  together imply a family of hyperbolic polynomials $\mathcal{P}$ with the following properties:
\begin{enumerate}
	\item $|\mathcal{P}| = 2^{|\mathcal{M}_d|}$ where $|\mathcal{M}_d | \geq (n/d)^{\kappa d}$ for some absolute constant $\kappa > 0$.        
    
    \item For all $p \neq p' \in \mathcal{P}$, $\hdist(K_p,K_p') \geq \gamma$ for $\gamma > \frac{1}{n^{3nd}}$.
    
    \item For every $p \in \mathcal{P}$, the positive orthant $\R_+^n$ is contained in $K_p$.
    \end{enumerate}
The last observation follows from the fact that positive orthant is contained in $K_{e_d}$, and the perturbations of $e_d$ in $\mathcal{P}$ are small enough to keep all the coefficients non-negative.

Suppose each of the hyperbolicity cones $K_p$ admitted a $\gamma/3-$approximate spectrahedral representation in dimension $B$.  By Lemma \ref{lem:normalization}, for each cone in $K_p$, there exists a normalized $\gamma/3$-approximate spectrahedral representation $C_p$ in dimension $B_p \leq B$
	Further, by Lemma \ref{lem:conetomatrices}, for every pair of polynomials $p, p' \in \mathcal{P}$, their corresponding $\gamma/3$-approximate spectrahedral representations satisfy $\mdist(C_p, C_{p'}) \geq n^{-3/2} \gamma/3 \coloneqq \eta$.  

Notice that in every normalized spectrahedral representation $C_p$ in dimension $B$, every matrix $C \in C_p$ satisfies $C \succeq 0$ and $C \preceq \Id_B$.  This implies that $\|C\|_{F} \leq \sqrt{B}$.  By a simple volume argument, for every $\eta > 0$, the number of normalized  spectrahedral representations in $\R^{B\times B} $ whose pairwise distances are $\geq \eta$ is at most $\left( \sqrt{B} /\eta \right)^{nB^2}$.  Since every cone $K_p$ for $p \in \mathcal{P}$ admits a normalized spectrahedral representation of dimension at most $B$, we get that
\[ |\mathcal{P}| \leq t \left(\sqrt{B} /\eta \right)^{nB^2} \ ,\]
which implies that
\[ B^2 \log B \geq  |\mathcal{M}_d |/\log(n^{3/2}/ \gamma) \geq \frac{1}{n \log n}  \cdot (n/ d)^{\kappa d}  \ . \]
which implies the lower bound of  $B \geq (n/d)^{\kappa' d}$ for some constant $\kappa' > 0$.
\end{proof}  

\subsection*{Acknowledgments} We thank Jim Renegar for pointing out a mistake in a previous version of the proof of Lemma \ref{lem:edmd}. We thank
the Simons Institute for the Theory of Computing and MSRI, where this work was partially carried out, for their hospitality.

\providecommand{\bysame}{\leavevmode\hbox to3em{\hrulefill}\thinspace}
\providecommand{\MR}{\relax\ifhmode\unskip\space\fi MR }
\providecommand{\MRhref}[2]{%
  \href{http://www.ams.org/mathscinet-getitem?mr=#1}{#2}
}
\providecommand{\href}[2]{#2}

\appendix
\section*{Appendix}
\begin{proof}[Proof of Lemma \ref{lem:jacobi}]
Let $z_1\le,\ldots,\le z_d=\lambda$ be the roots of $J(t)$. Observe that
$$J'(\lambda) = \prod_{i<d}(z_d-z_i)\ge |z_d-z_{d-1}|^{d-1},$$
so if we can prove a lowerbound on the spacing between $z_d$ and $z_{d-1}$ we are done.

We do this by recalling from Lemma \ref{eqn:jacobi} that:
$$J(t)=\frac{1}{d!}D^{n-d}t^{n-d}(1+t)^{d}.$$
Let $q_k(t):=D^{k}t^{n-d}(1+t)^{d}$ and note that $q_{k+1}$ interlaces $q_k$, and all of these polynomials have real roots in $[-1,0]$. Let $y_k$ be the largest root of $q_k$ that is strictly less than $0$, noting that every $q_k$ for $k<{n-d}$ has a root at $0$ because $q_0$ has a root of multiplicity $n-d$ at $0$. By lemma \ref{lem:aspect}, we have for $k=1,\ldots,n-d$:
$$y_{k+1}\le 0-\frac{1}{n}|0-y_{k}|\le -|y_k|/n.$$
Noting that $y_0=-1$ and iterating this bound $n-d-1$ times we obtain
$$y_{n-d-1}\le -1/n^{n-d-1},$$
so that $D^{n-d-1}q_0$ has a root at zero and a root $y_{n-d-1}\le -1/n^{n-d}.$ Since $J(t)=Dq_{n-d-1}(t)$, we must have $z_{d-1}\le -y_{n-d-1}$ by interlacing. However, applying Lemma \ref{lem:aspect} once more, we see that
$$ z_d\ge -y_{n-d-1}+|0+y_{n-d-1}|/n = -(1-1/n)y_{n-d-1},$$
so the gap must be at least
$$z_d-z_{d-1}\ge y_{n-d-1}/n = \frac{1}{n^{n-d}}.$$
Thus, we conclude that
$$J'(\lambda)\ge \frac{1}{n^{(d-1)(n-d)}}\ge \frac{1}{n^{d(n-d)}},$$
as desired.
\end{proof}


\end{document}